\documentclass[11pt]{article}
\usepackage{amsthm}
\usepackage{amsmath}
\usepackage{amssymb}
\usepackage{enumerate}
\usepackage{epsfig}
\usepackage{srcltx}
\usepackage{rotating}
\usepackage{lpic}

\usepackage[normalem]{ulem}

\usepackage{xcolor}

\usepackage{float}

\usepackage{tikz}
\usetikzlibrary{cd}

\usepackage[top=2cm,bottom=2.5cm,left=2.5cm,right=2.5cm]{geometry}

\newcounter{contador}
\newtheorem{propo}[contador]{Proposition}

\newtheorem{lem}[contador]{Lemma}
\newtheorem{defi}[contador]{Definition}
\newtheorem{corol}[contador]{Corollary}

\theoremstyle{remark}
\newtheorem{nota}[contador]{Remark}

\newcommand{\rec}{\noindent}    % NO INDENTAR DESPR+S D'UN PAR+GRAF
\newcommand{\ve}{\varepsilon}

\newcommand{\R}{{\mathbb R}}

\newcommand{\U}{{\cal{U}}}

\title{Dynamics of Kahan-Hirota-Kimura maps with \\ rational invariant fibrations.}
\author{ V\'{\i}ctor Ma\~{n}osa$^{(1)}$ and Chara Pantazi$^{(2)}$
    \\*[0.1 truecm]
    \\*[-.1 truecm] {\small \textsl{$^{(1)}$ Departament de Matem\`{a}tiques (MAT),}}
     \\*[-0.1 truecm] {\small \textsl{Institut de Matem\`{a}tiques de la UPC-BarcelonaTech (IMTech),}}
    \\*[-0.1 truecm] {\small \textsl{Universitat Polit\`{e}cnica de Catalunya-BarcelonaTech (UPC)}}
    \\*[-0.1 truecm] {\small \textsl{Colom 11, 08222 Terrassa, Spain}}
    \\*[-0.1 truecm] {\small \textsl{victor.manosa@upc.edu}}
        \\*[-0.1 truecm] {\small \textsl{$^{(2)}$ Departament de Matem\`{a}tiques (MAT),}}
    \\*[-0.1 truecm] {\small \textsl{Universitat Polit\`{e}cnica de Catalunya-BarcelonaTech (UPC)}}
    \\*[-0.1 truecm] {\small \textsl{Doctor Mara\~{n}\'on 44-50,
08028 Barcelona, Spain}}
    \\*[-0.1 truecm] {\small \textsl{chara.pantazi@upc.edu}}}

\begin{document}

\maketitle
\begin{abstract}
We present a simple method to study the dynamics of planar Kahan-Hirota-Kimura (KHK) maps preserving rational fibrations. Using this approach, we show that integrable KHK maps may exhibit complex dynamics, even when obtained from vector fields with trivial behavior.
As an application, we study the KHK map associated with a quadratic planar vector field with an isochronous center. This map preserves the original first integral and admits the vector field as a Lie symmetry. Moreover, for a dense set of values of the integration step, it is globally periodic and exhibits all possible periods except 2. We also provide evidence of non-integrability for KHK maps associated with other quadratic vector fields possessing isochronous centers. To overcome this issue, we introduce the notion of pseudo-KHK maps, as alternative integrable discretizations for vector fields with isochronous centers. These maps are constructed to preserve the first integrals of the original vector field and to ensure that the vector field itself is a Lie symmetry of the map. The construction can be extended to isochronous centers of degree greater than two.
\end{abstract}

\rec {\sl 2010 Mathematics Subject Classification:} 
37E45, 37J70, 39A23, 39A36.

\rec {\sl Keywords:}  Global periodicity; Isochronous centers; Kahan-Hirota-Kimura maps; Linearizations; Rational curves; Periodic
orbits; Pseudo-Kahan-Hirota-Kimura maps.

\section{Introduction}

The Kahan-Hirota-Kimura (KHK) maps arise from the discretization method developed independently by Kahan, Hirota, and Kimura \cite{hirota2000, kahan1993, KL97, kimura2000}, aimed at numerically solving quadratic ODEs. These maps play a key role in the study of integrable systems, as they often admit first integrals when derived from Hamiltonian vector fields or systems possessing conserved quantities. In recent years, numerous publications have emphasized their rich geometric properties. Significant advances have been made in the case of planar KHK maps with a first integral whose level curves are elliptic (i.e., of genus 1). These results have been obtained, in particular, within the framework of the key result by Celledoni, McLachlan, Owren, and Quispel \cite{Celledoni2013}, which shows that KHK maps arising from the discretization of cubic Hamiltonian vector fields also admit a rational first integral with cubic level curves \cite{Celledoni2013, Celledoni2014, Celledoni2019b, Petrera2011, Petrera19-1, Petrera19-3}. In this setting, it is well known that the dynamics on each level curve can be described using the group structure of the curve \cite{JRV}, or equivalently, as the composition of the so-called Manin involutions. In such cases, KHK maps exhibit a remarkable geometric structure.

References \cite{Cel17, Kamp19} investigate scenarios in which the fibration preserved by integrable KHK maps is defined by quadratic curves, which generically have genus 0, that is, they are rational. In the examples studied in these works, it is shown that KHK maps can also be expressed as compositions of Manin involutions. Within this framework, KHK maps associated with vector fields admitting a linear integrating factor have been studied in \cite{Petrera19-2}.

In this paper, we highlight a simple methodology that provides an alternative approach to the study of the dynamics of KHK maps preserving rational (genus 0) fibrations. This approach was also employed in \cite{LlorMan}. In this setting, the restriction of the map to each curve is conjugate to a M\"obius transformation, which yields a simple description of the global dynamics.

Although the proposed approach is general, we focus on particular cases where the KHK map arise from a planar vector field with a center. As shown in \cite{LlorMan}, when the maps admit a first integral whose level sets form a family of closed curves surrounding the equilibrium, the restriction of the map to each curve is conjugate to a rotation with an explicit rotation number (which, typically,  depends on the energy level). When this rotation number is non-constant on the different energy levels, as is typically the case, there exist open and dense sets of curves on which the KHK maps are periodic. In such cases, the set of possible periods is unbounded. One can also find curves filled with dense orbits. On some other level curves, there may exist one or two attracting and/or repelling fixed points. This allows us to show that \emph{integrable} KHK maps can possess a significantly richer dynamics than the flows they are meant to approximate. Of course, \emph{non-integrable} KHK maps can exhibit a wide variety of dynamical behaviors, as shown, for example, in \cite{FF}. 

In Section~\ref{s:method}, we briefly present the methodology proposed in this work and review some standard results, including the definition of a Lie symmetry of a map.  A fully developed example of the methodology is presented in Section~\ref{s:exempleA}, where we revisit a particular case among those studied in \cite{Petrera19-2}. The main results of the section are Propositions \ref{p:dinamica-exa}, \ref{p:periodes-exa}
and \ref{p:liesymmesuraexa} which characterize the global dynamics, the (infinite) set of periods and the existence of Lie Symmetries and invariant measures for the KHK-maps considered in the example. We believe this is an example of how the dynamics of integrable KHK maps can be very rich, even when the dynamics of the original vector field is trivial.

In Section~\ref{s:isocrons}, we will focus on KHK maps associated with quadratic vector fields with isochronous centers which, for simplicity,  we will call quadratic \emph{isochronous vector fields}. These fields were characterized in \cite{L} (see also \cite{Sab99}), obtaining four different cases, namely $S_i$ with $i=1,\ldots,4$, following the notation in \cite{ChaSab}.  Three of these systems have a first integral whose level curves are rational ones. In particular, we study the dynamics of a one-parameter family of KHK maps which depend on the integration step size, namely $\Phi_{1,\epsilon}$, associated with $S_1$. Using the methodology in this paper, we show that the maps in the family preserve the first integral of the original vector field which, in addition, is a Lie symmetry of the maps. Furthermore, {on each level curve of the first integral, the dynamics is conjugate to a rotation, and for each fixed step size, the rotation number is constant}. As a consequence, for a dense set of values of the step size parameter, the maps are globally periodic, realizing all possible periods except period $2$. This offers a meaningful example in the study of global periodicity, since global periodicity of rational maps is a rare phenomenon, \cite{CGMs04,CGMM16}.  The main results are Propositions \ref{p:propS1}, \ref{p:dinamicaS1} and \ref{p:periodesS1} in Section \ref{ss:S1}, which characterize some general properties of the KHK maps associated with this example, $S_1$, describe the global dynamics and the set of periods, respectively.

For the other isochronous quadratic vector fields, namely $S_2$, $S_3$ and $S_4$, we have obtained numerical evidence indicating that the associated KHK maps are non-integrable. We present these evidences in Section \ref{ss:S2}.
To find integrable maps associated with quadratic vector fields that have isochronous centers, while staying within the framework of KHK maps, in Section \ref{s:pseudoKHK} we introduce a new class of integrable maps, that we call \emph{pseudo-KHK maps}. These maps are constructed by using the fact that isochronous vector fields can be linearized, and by considering the KHK maps associated with the corresponding linearized systems. The main result is Proposition \ref{p:lemaphil}, which establishes that pseudo-KHK maps  preserve the first integrals of the original vector fields which, in fact, are  Lie symmetries of these maps. Since pseudo-KHK maps arise inspired by a numerical integration method, it is particularly significant that they admit the original vector fields as Lie symmetries and map the orbits of these fields into themselves: this implies that the maps correspond to the flow of the vector field for a time that depends on the orbit \cite{CGM08}. Furthermore, on each invariant closed curve pseudo-KHK maps are conjugate to a rotation with constant rotation number  on each curve for any fixed $\epsilon$.

 In Corollary \ref{c:pseudoKHKquadratic} we establish the above type of results for the pseudo-KHK maps of the quadratic isochronous vector fields. In fact, despite being different maps, all of them are conjugate to $\Phi_{1,\epsilon}$. 

We  emphasize that pseudo-KHK maps can be constructed for isochronous vector fields of degree greater than two, thereby providing a new class of integrable maps associated with isochronous vector fields, see Section \ref{sspKHKnonquadratic}. 

Finally, Section \ref{s:conclusions} summarizes the main conclusions of this work.

\section{Methodology}\label{s:method}

Given a quadratic planar vector field $X$, we consider the associated KHK map given by

\begin{equation}\label{e:phi}
\Phi_\epsilon(\mathbf{x}) = \mathbf{x} + 2\epsilon\left(I - \epsilon\,\mathrm{D}X(\mathbf{x})\right)^{-1} X(\mathbf{x}),
\end{equation}
where $\mathrm{D}X$ is the differential matrix of the field. We say that the map is \emph{integrable} if it admits a \emph{first integral}, that is, a function $V$ such that $V(\Phi_\epsilon)=V$ in certain open set. In this paper, we assume that  $\Phi_\epsilon$ admits a rational first integral $V(x,y) = {V_1(x,y)}/{V_2(x,y)}$, being $V_1$ and $V_2$ coprime polynomials. This means that it preserves the fibration defined by the algebraic curves
\begin{equation}\label{e:ch}
C_h := \{V_1(x,y) - hV_2(x,y) = 0\},
\end{equation}
for $h \in \mathrm{Im}(V)$. We refer to $C_h$ as the \emph{energy level curves}. Additionally, we assume that both $\Phi_\epsilon$ and $V$ are defined on a common open subset of $\mathbb{R}^2$. In this work, we consider the case where the family forms a \emph{rational fibration}, meaning that, except perhaps for finitely many values of $h$, each curve $C_h$ is irreducible and has \emph{genus 0}, or in other words, is \emph{rational}.

A fundamental property of genus $0$ curves is that they admit a \emph{proper birational pa\-ra\-me\-tri\-za\-tion} in $\mathbb{R}$, \cite[Lemma 4.13 and Theorem
4.63]{SWPD}. That is, for any genus $0$ curve $C$ 	in $\mathbb{R}^2$ there exist a rational map
\begin{equation}\label{e:P}
P(t)=(P_1(t), P_2(t))=\Bigg(\frac{P_{11}(t)}{P_{12}(t)},\ \frac{P_{21}(t)}{P_{22}(t)}\Bigg),
\end{equation} 
where $P_{ij}(t)\in\R[t]$ and $\gcd(P_{1i},P_{2i})=1$, and such that
for almost all values of $t \in \mathbb{R}$, we have $(P_1(t), P_2(t)) \in C$. Conversely, for almost every point $(x,y) \in C$, there exists $t \in \mathbb{R}$ such that $(x,y) = (P_1(t), P_2(t))$. Moreover, the inverse map $P^{-1}$ is also rational. Such a parametrization is unique up to M\"obius transformations.  

\medskip

Now, using the steps in \cite{LlorMan}, the methodology is:

\begin{description}
\item[Step 1:] Given the rational fibration of energy level curves \eqref{e:ch}, we compute a proper parametrization over $\R$ of each curve $C_h$, $P_h(t)$ of the form \eqref{e:P}.
\end{description}

There are several well-known standard methods
for the computation of rational pa\-ra\-me\-tri\-za\-tions, see \cite[Chapter 4]{SWPD} for instance. In this paper, we will use a direct method for the calculation of $P_h^{-1}(t)$ based on \cite[Thm. 4.37]{SWPD} (see also the Appendix of \cite{LlorMan}) which, in short, establishes that setting 
${R_1}(t,x):=x\, P_{12}(t)-P_{11}(t)$ and $
{R_2}(t,y):=y\, P_{22}(t)-P_{21}(t),$ 
then the polynomial $R(x,y,t):=\gcd_{\mathbb{R}(C)[t]}({R_1},{R_2})$ (where $\mathbb{R}(C)[t]$ is the
set of polynomials with coefficients in the field of rational functions in
$C$) is linear in $t$. Moreover, setting  $R(x,y,t)=D_1(x,y)\, t-D_0(x,y)$ , then
inverse of the parametrization $P_h$ is given by
\begin{equation}\label{E_inversa-general}
P^{-1}_h(t)=\frac{D_0(x,y)}{D_1(x,y)}.
\end{equation}

\begin{description}
\item[Step 2:]  Once the proper parametrization has been computed, we will use the well known fact that \emph{any birational map restricted to a rational curve is conjugate to a M\"obius transformation on the extended real line $ \widehat{\mathbb{R}} = \mathbb{R} \cup \{\infty\}$}, to obtain that on each energy level curve, $\Phi_{\epsilon|C_h}$ is conjugated to a M\"obius map  via: 
\begin{equation}\label{e:conjmoeb}
M_h(t) = P_h^{-1} \circ \Phi_{\epsilon|C_h} \circ P_h(t).
\end{equation}
\end{description}

The dynamics of M\"obius transformations is well known, and the maps $M_h$ can be studied in a straightforward manner. For the sake of completeness, we state the following result, which summarizes this (see, for instance, \cite{CGM}).

\begin{propo}\label{p:dinamica-moeb}
Consider the map $M(t)=(at+b)/(ct+d)$, where $a,b,c,d\in\R$, with
$c\neq 0$, defined for  $t\in \widehat{\R}= \mathbb{R} \cup \{\infty\}$. Set
$\Delta=(d-a)^2+4bc$ and
$\xi=(a+d+\sqrt{\Delta})/(a+d-\sqrt{\Delta})$.
\begin{enumerate}[(a)]
    \item If $\Delta<0$, then $M$ is conjugated to a
    rotation in $\widehat{\R}$ with rotation number
    $\rho:=\frac{1}{1\pi}\arg(\xi)\in[0,1)$. In particular, $M$ is periodic with minimal period $p$ if and only if $\xi$ is a primitive $p$-root of the unity.
  \item If $\Delta=0$, then there is a unique fixed point $t_0$ which is
  a global attractor in
  $\widehat{\R}$. 
  \item If $\Delta>0$ and $|\xi|\neq 1$, then there are two fixed points
  $t_0$ and $t_1$ in $\widehat{\R}$, one of them  say $t_j$, is
  an attractor of $M$ in $\widehat{\R}\setminus\{t_{j+1\, (\mathrm{mod}\, 2)}\}$ and the other a repellor. If $|\xi|=1$, then $\xi=-1$ and the map is 
  an involution, i.e. $M\circ M(t)\equiv t$. 
\end{enumerate}
\end{propo}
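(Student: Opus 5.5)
We regard $M$ as the projective action of the matrix $A=\begin{pmatrix} a & b\\ c & d\end{pmatrix}$ on $\widehat{\R}\cong\mathbb{RP}^1$ and classify by conjugacy. We assume $ad-bc\neq 0$, since otherwise $M$ is constant. The fixed points solve $ct^2+(d-a)t-b=0$. Because $c\neq 0$, this is a genuine quadratic and $\infty$ is not fixed. Its discriminant is exactly $\Delta=(d-a)^2+4bc$. The eigenvalues of $A$ are $\lambda_\pm=\bigl(a+d\pm\sqrt{\Delta}\bigr)/2$, so $\xi=\lambda_+/\lambda_-$ is the ratio of the eigenvalues. The three cases are therefore: complex conjugate eigenvalues, a repeated eigenvalue, and distinct real eigenvalues.

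\textbf{Case $\Delta>0$.} There are two real fixed points $t_0,t_1$. Conjugate by the Möbius map $h(t)=(t-t_0)/(t-t_1)$, which sends $t_0\mapsto 0$ and $t_1\mapsto\infty$. Then $h\circ M\circ h^{-1}(s)=\kappa s$ for some multiplier $\kappa$. Diagonalizing $A$ identifies $\kappa$ with $\xi$ or $\xi^{-1}$, depending on the labeling of the fixed points.
\begin{itemize}
\item If $|\xi|\neq1$, then iterates of $s\mapsto\kappa s$ tend to $0$ or to $\infty$ for every $s\neq 0,\infty$. Pulling back by $h$, one fixed point attracts all of $\widehat{\R}$ except the other fixed point, which is a repellor.
\item If $|\xi|=1$, then $\xi$ is real and $\xi\neq1$, because $\sqrt{\Delta}\neq0$. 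Hence $\xi=-1$, which forces $a+d=0$. Then $A^2=-\det(A)\,I$ by Cayley--Hamilton, so $M\circ M=\mathrm{id}$.
\end{itemize}

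\textbf{Case $\Delta=0$.} There is a double fixed point $t_0=(a-d)/(2c)$. Conjugating by $t\mapsto 1/(t-t_0)$ sends $t_0$ to $\infty$, and the map becomes a translation $s\mapsto s+\beta$. One checks $\beta\neq0$, since otherwise $A$ would be scalar, which contradicts $c\neq0$. The orbits $s+n\beta$ of the translation tend to $\infty$ in both time directions. Pulling back, every orbit converges to $t_0$, which gives the global attraction claimed in (b). It is a parabolic attractor in the forward sense.

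\textbf{Case $\Delta<0$.} This is the main step. $A$ has eigenvalues $\lambda,\bar\lambda$ with $\lambda=re^{i\theta}$, so $\xi=\lambda/\bar\lambda=e^{2i\theta}$ lies on the unit circle. Over $\R$, $A$ is conjugate (up to scaling) to the rotation matrix $R_\theta$. Concretely, take $g$ in the real Möbius group with $g A g^{-1}=rR_\theta$. The projective action of $R_\theta$ on $\mathbb{RP}^1$ becomes the circle rotation by angle $2\theta$ under the double cover $t=\tan(\phi)$, $\phi\mapsto 2\phi$. Equivalently, use the Cayley map $t\mapsto (t-z_0)/(t-\bar z_0)$, where $z_0$ is the complex fixed point in the upper half plane. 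It sends $\widehat{\R}$ to the unit circle and conjugates $M$ to multiplication by $\xi$ or $\bar\xi$. Hence $M$ is conjugate to a rotation with rotation number $\rho=\frac{1}{2\pi}\arg(\xi)$. The ``$1\pi$'' in the statement should be read as $2\pi$.

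Periodicity then follows directly. $M^p=\mathrm{id}$ if and only if $\xi^p=1$, and the minimal period is the order of $\xi$ in $\mathbb{C}^*$. So $M$ has minimal period $p$ exactly when $\xi$ is a primitive $p$-th root of unity. Note $\xi\neq1$ here, since $\Delta\neq 0$.

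The main obstacle is the bookkeeping of the conjugacy in the elliptic case: showing the conjugating map is real and orientation-consistent, and checking that the normalization $\xi=\lambda_+/\lambda_-$ matches the circle multiplier rather than its inverse. The choice between $\xi$ and $\bar\xi$ only changes $\rho$ to $1-\rho$, so it does not affect the periodicity statement.
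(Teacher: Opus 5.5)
Your argument is correct and is the standard proof. The paper does not prove this proposition at all; it quotes it as well known and refers to \cite{CGM}. Your classification by the eigenvalues $\lambda_\pm$ of the matrix supplies what the paper omits: the normal forms $s\mapsto\kappa s$ and $s\mapsto s+\beta$, and the Cayley map to the unit circle in the elliptic case. One small step deserves a line of justification. In the case $\Delta=0$, the conjugated map fixes only $\infty$, so it is affine with no finite fixed point, which forces slope $1$.

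The point you defer as ``bookkeeping'' cannot be settled in favour of $\xi$. It is an ambiguity in the statement itself: replacing $(a,b,c,d)$ by $(-a,-b,-c,-d)$ leaves $M$ unchanged but sends $\xi$ to $1/\xi=\bar\xi$.

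Your Cayley map makes the resolution precise. If $c>0$, the fixed point $z_0$ in the upper half plane satisfies $cz_0+d=\lambda_+$. Hence the multiplier is $M'(z_0)=\det A/(cz_0+d)^2=\lambda_-/\lambda_+=\bar\xi$. The map $t\mapsto (t-z_0)/(t-\bar z_0)$ carries increasing $t$ to the counterclockwise orientation. So, for the usual orientation of $\widehat{\R}$, the rotation number is $1-\frac{1}{2\pi}\arg(\xi)$ when $c>0$, and $\frac{1}{2\pi}\arg(\xi)$ when $c<0$.

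For example, $M(t)=(t-\sqrt3)/(\sqrt3\,t+1)$ has $\xi=e^{2\pi i/3}$. It cycles $0\mapsto-\sqrt3\mapsto\sqrt3\mapsto 0$, which is rotation number $2/3$ for increasing $t$.

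Rotations by $\rho$ and $1-\rho$ are conjugate through a reflection, so statement (a) holds as a topological conjugacy, and that is exactly what you prove. Nothing sharper is provable without fixing an orientation and a normalization of the matrix. Your remark that the periodicity claim is insensitive to this choice is right.
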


As a technical issue, the following result allows us to study the rotation number function of KHK-maps via M\"obius transformations, by focusing only on the corresponding invariant curves for $\epsilon > 0$.  

\begin{lem}\label{l:rotmenysepsilon}
Let $C$ be an invariant closed curve of a given KHK-map \eqref{e:phi}. Suppose that $\Phi_{\epsilon}|_C$ is conjugate to a rotation with rotation number $\rho_\epsilon$. Then, $\Phi_{-\epsilon}|_C$ is conjugate to a rotation with rotation number $\rho_{-\epsilon} = 1 - \rho_\epsilon$.  
\end{lem}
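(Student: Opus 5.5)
The natural route is the well-known reversibility of the Kahan--Hirota--Kimura discretization: $\Phi_{-\epsilon}=\Phi_\epsilon^{-1}$. Once this is established, the statement reduces to the elementary fact that the inverse of a rotation by $\rho$ is the rotation by $-\rho\equiv 1-\rho \pmod 1$. This conjugation is realized by the same homeomorphism that conjugates $\Phi_\epsilon|_C$ to the rotation.

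The first step is to prove $\Phi_{-\epsilon}\circ\Phi_\epsilon=\mathrm{id}$ wherever both maps are defined. I would use the implicit form of the KHK scheme for a quadratic field $X(\mathbf{x})=Q(\mathbf{x})+L\mathbf{x}+c$:
\[
\frac{\mathbf{x}'-\mathbf{x}}{2\epsilon}=\widetilde{Q}(\mathbf{x},\mathbf{x}')+\tfrac12 L(\mathbf{x}+\mathbf{x}')+c,
\]
where $\widetilde{Q}$ is the symmetric bilinear form polarizing $Q$. First I would check that this implicit relation, which is linear in $\mathbf{x}'$, is solved exactly by \eqref{e:phi}. The computation is
\[
\widetilde{Q}(\mathbf{x},\mathbf{x}')=Q(\mathbf{x})+\widetilde{Q}(\mathbf{x},\mathbf{x}'-\mathbf{x})
\quad\text{and}\quad
2\widetilde{Q}(\mathbf{x},\cdot)+L=\mathrm{D}X(\mathbf{x}),
\]
which gives $(I-\epsilon\,\mathrm{D}X(\mathbf{x}))(\mathbf{x}'-\mathbf{x})=2\epsilon X(\mathbf{x})$. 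The relation is invariant under swapping $\mathbf{x}\leftrightarrow\mathbf{x}'$ together with $\epsilon\mapsto-\epsilon$, because the right-hand side is symmetric in $(\mathbf{x},\mathbf{x}')$. Hence if $\mathbf{x}'=\Phi_\epsilon(\mathbf{x})$, then $\mathbf{x}$ satisfies the defining relation of $\Phi_{-\epsilon}$ at $\mathbf{x}'$. Uniqueness of that solution requires $I+\epsilon\,\mathrm{D}X(\mathbf{x}')$ to be invertible, and then $\Phi_{-\epsilon}(\mathbf{x}')=\mathbf{x}$.

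The second step is the rotation argument. Since $C$ is invariant under $\Phi_\epsilon$ and $\Phi_\epsilon|_C$ is a bijection conjugate to a rotation, $\Phi_\epsilon^{-1}|_C=\Phi_{-\epsilon}|_C$ also leaves $C$ invariant. Write $\Phi_\epsilon|_C=h^{-1}\circ R_{\rho_\epsilon}\circ h$ with $h:C\to S^1=\R/\mathbb{Z}$ and $R_\rho(\theta)=\theta+\rho$. Then
\[
\Phi_{-\epsilon}|_C=h^{-1}\circ R_{\rho_\epsilon}^{-1}\circ h=h^{-1}\circ R_{1-\rho_\epsilon}\circ h,
\]
so $\rho_{-\epsilon}=1-\rho_\epsilon$ when the rotation number is normalized in $[0,1)$. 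The case $\rho_\epsilon=0$ gives $1$, which is again $0$ modulo $1$. If the rotation numbers come from Proposition~\ref{p:dinamica-moeb}, the same conclusion follows directly. The Möbius map $M_h$ for $-\epsilon$ is $M_h^{-1}$, whose multiplier is $\xi^{-1}=\bar\xi$ when $|\xi|=1$, and this gives $\arg$ replaced by $-\arg$.

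The main obstacle is the domain issue in the first step. One needs $I+\epsilon\,\mathrm{D}X(\Phi_\epsilon(\mathbf{x}))$ to be invertible on $C$, so that the identity $\Phi_{-\epsilon}\circ\Phi_\epsilon=\mathrm{id}$ holds on all of $C$ and not just generically. I would handle this using the hypothesis that $\Phi_\epsilon|_C$ is a homeomorphism conjugate to a rotation. That hypothesis means $\Phi_\epsilon$ is defined and invertible on the whole of $C$. The identity holds on a Zariski-dense open subset of $C$, so by continuity it extends to all of $C$, with $\Phi_{-\epsilon}|_C$ taken as the continuous extension.
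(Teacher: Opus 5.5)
Your proposal is correct and follows the same route as the paper. Both arguments rest on the reversibility $\Phi_{-\epsilon}=\Phi_{\epsilon}^{-1}$ and on the fact that if $\Phi_\epsilon|_C=h^{-1}\circ R_{\rho_\epsilon}\circ h$, then the same $h$ conjugates $\Phi_\epsilon^{-1}|_C$ to $R_{-\rho_\epsilon}=R_{1-\rho_\epsilon}$. The paper takes the reversibility as known, whereas you derive it from the symmetry of the implicit KHK relation under $(\mathbf{x},\mathbf{x}',\epsilon)\mapsto(\mathbf{x}',\mathbf{x},-\epsilon)$, and you also handle the $\rho_\epsilon=0$ normalization and the domain of definition on $C$, which the paper leaves implicit.
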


\begin{proof}  
The result follows directly from the fact that $\Phi_{-\epsilon} = \Phi_{\epsilon}^{-1}$ and from the property that if $f$ is a homeomorphism of the circle conjugate to a rotation by an angle $\theta \in [0, 2\pi)$, that is, with rotation number $\rho(f) = \theta / 2\pi$, then $f^{-1}$ is conjugate to a rotation by an angle $2\pi - \theta\in [0, 2\pi)$, which implies that $\rho(f^{-1}) = 1 - \rho(f)$.  
\end{proof}  

We point out that KHK maps preserving genus 0 fibrations always admit Lie symmetries and invariant measures, as follows from Theorem 2 and Corollary 5 in \cite{LlorMan}. We recall that a vector field $X$ in $\mathbb{R}^n$ is said to be a Lie symmetry of a map $F$ if it satisfies the \emph{compatibility equation}
\begin{equation}\label{e:Lie-Symm}
X|_{F(\mathbf{x})}=\left(DF\cdot X\right)|_{\mathbf{x}},
\end{equation}
where $DF$ denotes the Jacobian matrix of $F$ and $X|_{F(\mathbf{x})}$ means $X$ evaluated at $F(\mathbf{x})$, \cite{CGM08,HBQC}. The vector field ${X}$ is linked to the dynamics of the map because $F$ maps each orbit of the differential system associated with ${X}$ to another orbit of the same system. When $F$ maps the orbits of $X$ into themselves, the dynamics of $F$ restricted to these orbits is conjugate to a linear action \cite[Theorem 1]{CGM08}. In the integrable case,  the presence of a Lie symmetry completely determines the system’s behavior, see again \cite{CGM08}. Furthermore, in the \emph{planar case}, if $F$ has a differentiable first integral $V$ on an open set $\U\subseteq\R^2$, and there exists a function $\mu\in\mathcal{C}^{m}$ in an open set of $\U$ such that
$X(x,y)=\mu(x,y)\,\left(- V_y(x,y)\frac{\partial}{\partial x}+V_x(x,y) \frac{\partial}{\partial y}\right)$ is a Lie symmetry, then, the map preserves a measure that is absolutely continuous with respect to the Lebesgue measure, with a non-zero density $\nu\in\mathcal{C}^{m}$ in $\U$, given by $\nu(x,y)=\frac {1}{\left|\mu(x,y)\right|}$. See Theorem 12(ii) in \cite{CGM08} or Corollary 5 in \cite{LlorMan}.

As already mentioned, it is particularly significant when KHK maps admit the original vector fields as Lie symmetries, since these maps originate as numerical integration methods. For this reason, we pay special attention to this property whenever it occurs.

\section{Vector fields with quadratic first integrals}\label{s:exempleA}

From a dynamical point of view, systems of the form 
\begin{equation}\label{e:ham}
 (\dot{x},\dot{y})=\ell(x,y)\left(\dfrac{\partial H(x,y)}{\partial y},\ -\dfrac{\partial H (x,y)}{\partial x}\right),
\end{equation}
where $\ell(x,y)$ is a linear integrating factor and $H(x,y)$ is a quadratic polynomial, are trivial as they are orbitally equivalent to linear systems. However, their KHK-maps have a rich dynamics. A study from a geometrical perspective of some examples has been carried out in \cite{Celledoni2019b,Kamp19}  and \cite{Petrera19-2}, where they are described in terms of the composition of two Manin involutions. In this section, we will study a specific example from those considered by Celledoni, McLaren, Owren and Quispel and Petrera and Suris \cite{Celledoni2019b,Petrera19-1}, which illustrates the typical situation that occurs whenever a planar KHK map has an elliptic fixed point and is not globally periodic.  Although we only address a specific case, the methodology can be applied to study the dynamics of 
all the KHK maps discussed in the mentioned references.

For this example,  we will show that there exists an open set filled with a dense set of energy levels for which the dynamics of the KHK maps are periodic. In fact this is the general case within the framework of birational maps preserving genus 0 fibrations, \cite{BR,LlorMan}. We believe that this fact is  interesting given that KHK maps were originally conceived as a numerical method, and therefore, the curves filled with periodic points do not properly represent the solutions of the differential equation associated with the field. However, this fact has no significant practical implications as these curves filled with periodic points are not observable in numerical experiments, as already noticed in \cite{BR} and \cite[Section 2.6.2]{GM}, because despite these level curves being dense in an open set, they have measure zero. 

\subsection{KHK-map of a particular Petrera-Suris vector field with quadratic integral}

We consider the KHK maps associated with the vector field $X$ of type \eqref{e:ham}, with quadratic first integral (Hamiltonian)
$H(x,y)=x^2+xy+y^2/2-3x-2y$
and linear integrating factor $\ell(x,y)=x$, thus with associated differential system
\begin{equation}\label{e:cassenzill}
\dot{x}=x(x+y-2),\quad
\dot{y}=-x(2x+y-3).
\end{equation}
This example belongs to the family studied  in \cite[Theorem 1.1]{Petrera19-2} (see also  \cite{Celledoni2019b}).  A quick verification shows that the curves $\{H=h\}$ are ellipses for $h>-5/2$, and the point  $(1,1)$  for $h=-5/2$. Hence, the vector field $X$ posses a center at the point $(1,1)$ and a line of singular points given by $x=0$, which we call \emph{critical line}. The trajectories in the ellipses that do not intersect the line of singular points are closed orbits, while the trajectories in the  ellipses crossing $x=0$ have the singular points in the critical line as $\alpha$ and $\omega$-limit sets.
The associated KHK map is 
$$
\Phi_{\epsilon}(x,y)=\left(\frac{\epsilon\left(\epsilon+1 \right) x^{2}+\epsilon  x y +\left(1-2 \epsilon \right)x}{D(x,y)},
\frac{\epsilon\left(2 \epsilon-4 \right) x^{2}-\epsilon\left(\epsilon+3 \right) x y +6 \epsilon  x -\epsilon  \,y^{2}+\left(2 \epsilon +1\right) y}{D(x,y)}
\right).
$$ where $D(x,y)=2 \epsilon^{2} x^{2}-\epsilon\left(\epsilon+1 \right) x -\epsilon  y +2 \epsilon +1$. It is worth highlighting that the points on the line ${x=0}$ are fixed points of $\Phi_{\epsilon}$.

Applying Theorem 1.1 from \cite{Petrera19-2}, we obtain the following first integral for $\Phi_{\epsilon}$:
$$
V(x,y) = -\frac{\left(5\epsilon^{2} - 2\right) x^{2} - 2 x y - y^{2} + 6 x + 4 y}{2 \epsilon^{2} x^{2} + 2}.
$$
Although the sign in the above expression is somewhat arbitrary, it allows the admissible energy levels to correspond to $h > -5/2$, facilitating an analogy with the energy levels of the field's integral.  Therefore, the map $\Phi_{\epsilon}$ preserves the fibration defined by the following pencil of conics:
$$
C_h=\{ \left(2-5\epsilon^{2}-2 \epsilon^{2}h \right) x^{2}+2 x y +y^{2}-6 x -4 y-2h=0\}.
$$
An straightforward analysis shows that the classification of the conics in this pencil is given in Table~\ref{table:1}.

{\small
\begin{center}
\begin{table}[!h]
\centering
\begin{tabular}{|c|c|c|c|}
\hline
$h$  & $-\frac{5}{2}$& $\left(-\frac{5}{2},-\frac{5}{2}+\frac{1}{2\epsilon^2} \right)$&  $-\frac{5}{2}+\frac{1}{2\epsilon^2}$\\
\hline
Type &         Point  $(1,1)$                      &                                            Ellipse & Parabola \\
\hline
\hline
\hline
$h$ &
$\left(-\frac{5}{2}+\frac{1}{2\epsilon^2},-2+\frac{1}{2\epsilon^2}\right)$ & $-2+\frac{1}{2\epsilon^2}$ &  $\left(-2+\frac{1}{2\epsilon^2},\infty\right)$\\
\hline
Type &   Hyperbola   &    $2$ lines & Hyperbola\\ \hline
\end{tabular}
\caption{Classification of energy level curves $C_h$.}
\label{table:1}
\end{table}
\end{center}}

A priori and according to the information summarized in Table \ref{table:1}, the maps $\Phi_{\epsilon}$, viewed as a numerical method, reproduce the behavior of the original vector field within the energy range
$\left[-\frac{5}{2}, -\frac{5}{2} + \frac{1}{2\epsilon^2}\right)$, where the energy level curves are closed. In this sense, a straight computation shows that the ellipses surrounding the point $(1,1)$, which is an elliptic point of $\Phi_{\epsilon}$, do not cross the critical line $\{x=0\}$ for $h\in\left[-\frac{5}{2},-2\right)$.

The dynamics of the maps $\Phi_{\epsilon}$ is explained in the following results:

\begin{propo}\label{p:dinamica-exa}
Set $\epsilon\neq 0$, the following statement hold: 
\begin{enumerate}[(a)]
    \item For $h=-5/2$, $C_h$ is the elliptic fixed point, $(1,1)$,  of $\Phi_{\epsilon}$. 
    \item For $-5/2<h<-2$, the map
          $\Phi_{\epsilon}|C_h$ is conjugated to a rotation with  rotation number given by:
         
$$
\rho_{\epsilon}(h)= 
\frac{1}{2\pi}\arg\Bigg(\frac{1+\left(4+2 h \right) \epsilon^{2}-i\,
2\epsilon  \sqrt{-4-2h}}{1-(4+2h)\epsilon^2}\Bigg),
$$   
which is a monotonic function in $h$. More explicit expressions of this function are given in Equations \eqref{e:rotationnumberexa1}--\eqref{e:rotationnumberexa2b}.
      
      When $\rho_{\epsilon}(h)=p/q\in\mathbb{Q}$ all the orbits of $\Phi_{\epsilon}|C_h$ are $q$-periodic.  When $\rho_{\epsilon}(h)\in\mathbb{R}\setminus\mathbb{Q}$ all the orbits fill densely $C_h$.
\item For $h=-2$, the fixed point $(0,2)$ is a global attractor in $C_{-2}$. 
\item For $-2<h$, the points $P_{\pm}=(0,2\pm\sqrt{4+2 h})$ are fixed points of $\Phi_{\epsilon}|C_h$. Furthermore for $\epsilon>0$,  $P_{+}$ is a repellor and  $P_{-}$ is an attractor in $C_h\setminus P_{+}$, whereas for $\epsilon < 0$, $P_{+}$ becomes an attractor and $P_{-}$ a repellor.
\end{enumerate}
\end{propo}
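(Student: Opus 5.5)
The plan is to follow the two-step methodology of Section~\ref{s:method}: parametrize each conic $C_h$ rationally, conjugate $\Phi_\epsilon|_{C_h}$ to a M\"obius map $M_h$ via \eqref{e:conjmoeb}, and read off the dynamics from Proposition~\ref{p:dinamica-moeb}. Item (a) is a direct check. At $(1,1)$ we have $X(1,1)=0$, so $\Phi_\epsilon(1,1)=(1,1)$. The linearization of $\Phi_\epsilon$ there is the Cayley transform $(I-\epsilon A)^{-1}(I+\epsilon A)$ of $A=\mathrm{D}X(1,1)$. Since $A$ has purely imaginary eigenvalues (a center), its eigenvalues lie on the unit circle and are non-real. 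Together with $C_{-5/2}=\{(1,1)\}$ from Table~\ref{table:1}, this gives (a).

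For Step 1, I will exploit the fact that every $C_h$ passes through a point independent of the direction of approach on the critical line. Indeed, $C_h\cap\{x=0\}$ is $y^2-4y-2h=0$, i.e. $y=2\pm\sqrt{4+2h}$, and these points are real exactly when $h\ge -2$. Since this base point depends on $h$ with a square root, I prefer a parametrization independent of $h$'s radicals: the pencil contains the base points of $\{x^2=0\}\cap\{2xy+y^2-6x-4y=0\}$, namely $(0,0)$ and $(0,4)$ (each counted twice). So every $C_h$ passes through $(0,0)$, and I will parametrize by lines $y=tx$ through the origin. Substituting $y=tx$ gives a linear equation for $x$ after dividing by $x$:
$$x=\frac{6+4t}{2-5\epsilon^2-2\epsilon^2h+2t+t^2},\qquad y=tx,$$
with inverse $P_h^{-1}(x,y)=y/x$. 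Step 2 is then to compute $M_h(t)=\Phi_{\epsilon,2}/\Phi_{\epsilon,1}$ evaluated at $P_h(t)$. From the formula for $\Phi_\epsilon$, the factor $x$ cancels and we get
$$\frac{\epsilon(2\epsilon-4)x-\epsilon(\epsilon+3)y+6\epsilon-\epsilon y^2/x+(2\epsilon+1)y/x}{\epsilon(\epsilon+1)x+\epsilon y+1-2\epsilon}.$$
Plugging in the parametrization and using the curve equation should collapse this to a M\"obius map $M_h(t)=(at+b)/(ct+d)$ whose coefficients are polynomial in $\epsilon,h$.

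From $M_h$ I will compute $\Delta$ and $\xi$ of Proposition~\ref{p:dinamica-moeb}. I expect $\Delta$ to be proportional to $4+2h$ times a positive factor in $\epsilon$, so that $\Delta<0$ iff $h<-2$, and $\xi$ to take the stated form
$$\xi=\frac{1+(4+2h)\epsilon^2- i\,2\epsilon\sqrt{-4-2h}}{1-(4+2h)\epsilon^2}.$$
One can check that $|\xi|=1$ here, since $(1+u\epsilon^2)^2-4u\epsilon^2=(1-u\epsilon^2)^2$ with $u=4+2h$. From this:
\begin{enumerate}[(i)]
\item For $h<-2$, Proposition~\ref{p:dinamica-moeb}(a) gives (b). Periodicity for rational $\rho$ and density for irrational $\rho$ follow from conjugacy to a rotation; the conjugacy is a homeomorphism because the ellipses avoid $x=0$, so $P_h$ is a homeomorphism $\widehat{\R}\to C_h$. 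Monotonicity of $\rho_\epsilon(h)$ follows because $\xi$ has the form $(1-i s\epsilon)^2/(1+s^2\epsilon^2)$ with $s=\sqrt{-4-2h}$, so $\arg\xi=-2\arctan(s\epsilon)$ modulo $2\pi$, which is monotone in $h$.
\item For $h=-2$, $\Delta=0$ and part (b) of Proposition~\ref{p:dinamica-moeb} applies; the unique fixed point corresponds to $(0,2)$.
\item For $h>-2$, $\Delta>0$. The two fixed points correspond to the slopes at which $C_h$ meets $x=0$, which are the points $P_\pm$, and part (c) of Proposition~\ref{p:dinamica-moeb} applies.
\end{enumerate}
To decide which of $P_\pm$ attracts, I will evaluate $M_h'$ at the two fixed points, or equivalently compare the eigenvalue of $\mathrm{D}\Phi_\epsilon$ along $C_h$ at $P_\pm$ with $1$. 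The switch for $\epsilon<0$ then follows from $\Phi_{-\epsilon}=\Phi_\epsilon^{-1}$, as in Lemma~\ref{l:rotmenysepsilon}.

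The main obstacle is the symbolic simplification of $M_h$ in Step 2, which must reduce modulo the curve equation. There is also a subtlety to check: at $h>-2$ the parametrization degenerates at points on $x=0$, where $P_h^{-1}=y/x$ is undefined. For these points I would instead use the limiting slope, or reparametrize about $(0,4)$ if necessary.
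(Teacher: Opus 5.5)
Your Step~1 rests on a false claim, and the rest of the argument depends on it. The pencil is $C_h=\{V_1-hV_2=0\}$ with $V_1=(2-5\epsilon^2)x^2+2xy+y^2-6x-4y$ and $V_2=2\epsilon^2x^2+2$, not $V_2=x^2$; you have dropped the constant term $-2h$. Hence $(0,0)$ and $(0,4)$ lie on $C_h$ only for $h=0$. This agrees with your own correct computation $C_h\cap\{x=0\}=\{y^2-4y-2h=0\}$. The actual base points of the pencil satisfy $V_2=0$, i.e.\ $x=\pm i/\epsilon$, so no real point is common to all the curves. Substituting $y=tx$ gives
\[
(2-5\epsilon^2-2\epsilon^2h+2t+t^2)\,x^2-(6+4t)\,x-2h=0,
\]
which is a genuine quadratic in $x$ for $h\neq 0$. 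Your formula for $x(t)$ solves it only when $h=0$. Geometrically, $(x,y)\mapsto y/x$ is then projection from a point off the conic. It is generically two-to-one, not the inverse of a proper parametrization; for $h>-2$, $h\neq 0$, it even sends both $P_+$ and $P_-$ to $t=\infty$. So the M\"obius map $M_h$ you propose to compute does not exist in that form. The values of $\Delta$ and $\xi$ in your plan are only what you ``expect'', not something derived.

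What is missing is an $h$-dependent real base point valid on the whole admissible range. Points of $\{x=0\}$ such as $P_\pm$ fail precisely where you need them, since they are non-real for $h<-2$. The paper uses $(1,1+m)$: setting $x=1$ in $C_h$ gives $(y-1)^2=(1+\epsilon^2)(2h+5)$. So with $m^2=(1+\epsilon^2)(2h+5)$ the point is real for all $h\geq -5/2$, and keeping $m$ as a parameter avoids radicals. Lines through it give a proper parametrization, and the conjugate M\"obius map $(at+b)/(t+d)$ has $\Delta=8(h+2)$ and the stated $\xi$.

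Once this is repaired, the rest of your plan works. Two parts are cleaner than the paper's:
\begin{itemize}
\item the Cayley-transform argument for (a);
\item the factorization $\xi=(1-is\epsilon)^2/(1+s^2\epsilon^2)$ with $s=\sqrt{-4-2h}$. It gives $\rho_\epsilon(h)=1-\arctan(\epsilon s)/\pi$ for $\epsilon>0$, and hence monotonicity, without the case split in \eqref{e:rotationnumberexa1}--\eqref{e:rotationnumberexa2b}.
\end{itemize}
One caveat remains. For $|\epsilon|>1$, part of $(-5/2,-2)$ consists of parabolas and hyperbolas (Table~\ref{table:1}). So ``the ellipses avoid $x=0$'' does not cover all of (b), and there you must work on the projective closure of $C_h$, as in Remark~\ref{r:remarkaP4}.
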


\begin{nota}\label{r:remarkaP4}
It is worth noting that in the above result there is a good matching between the topology of the curves presented in Table \ref{table:1} and the dynamics of $\Phi_{\epsilon}|C_h$ through the corresponding M\"obius maps.  Indeed, on one hand, the range of energies for which $C_h$ is an ellipse is $\mathcal{E}:=\left(-\frac{5}{2},-\frac{5}{2}+\frac{1}{2\epsilon^2}\right)$. On the other hand, the range of energies for which $\Phi_{\epsilon}|C_h$ is conjugated to a rotation is $\mathcal{R}:=\left(-\frac{5}{2},-2\right)$. When $|\epsilon|<1$, we have $\mathcal{R}\subset\mathcal{E}$. For the energy levels $h\in \mathcal{E}\setminus\mathcal{R}$, although the curves $C_h$ are still ellipses, the maps $\Phi_{\epsilon}|C_h$ have fixed points which reflect the presence of the singular points of the associated differential system \eqref{e:cassenzill} (recall that for $h > -2$, all curves $C_h$ intersect the critical line $\{x = 0\}$).  In contrast, when $|\epsilon| > 1$, the range of energies for which $\Phi_{\epsilon}|C_h$ is conjugate to a rotation exceeds the range of energies for which $C_h$ is an ellipse, that is, $\mathcal{E} \subset \mathcal{R}$. According to Table \ref{table:1}, the curves $C_h$ are either a parabola or hyperbolas. However, $\Phi_{\epsilon}|{\tilde{C}_h}$ is still conjugate to a rotation on $\tilde{C}_h=C_h\cup [1:0:0]\in \mathbb{R}P^2$, the extension of those curves to the real projective space  obtained by adding them the  point at infinity $[1:0:0]$. The maps $\Phi_{\epsilon}|{\tilde{C}_h}$ can be studied through the conjugate M\"obius transformation on the extended real line $\mathbb{R} \cup \{\infty\}$.    
\end{nota}

From statement (b) in the above proposition, the rotation number function is a nonconstant analytic function for all $\epsilon\neq 0$, hence there exists a nonempty rotation interval. As a direct consequence, we obtain the following result:

\begin{corol}
The nonempty set $$\mathcal{P}=\left\{C_h\mbox{ with } -\frac{5}{2}<h<-2\mbox{ and such that }\rho_{\epsilon}(h)\in\mathbb{Q}\right\},
$$ where all the orbits of $\Phi_{\epsilon}|\mathcal{P}$ are periodic, is dense in the open set $\mathrm{Int}(C_{-2})$.
\end{corol}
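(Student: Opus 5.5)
The plan is to deduce the corollary from statement (b) of Proposition~\ref{p:dinamica-exa}, using continuity and strict monotonicity of the rotation number, together with the fact that the curves $C_h$, $h\in(-5/2,-2)$, foliate $\mathrm{Int}(C_{-2})$ minus the fixed point $(1,1)$.

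\textbf{Step 1: the foliation.} The level curves are defined by $V=h$, with $V$ continuous away from the zeros of $2\epsilon^2x^2+2$, hence on all of $\R^2$. From Table~\ref{table:1}:
\begin{itemize}
\item for $h\in(-5/2,-2)$ the curves $C_h$ are ellipses (at least when $|\epsilon|<1$, where $\mathcal R\subset\mathcal E$);
\item these ellipses are nested around $(1,1)$;
\item they exhaust the bounded region enclosed by $C_{-2}$, since $C_{-2}$ touches the critical line at $(0,2)$.
\end{itemize}
More precisely, $\mathrm{Int}(C_{-2})=\{-5/2\le V<-2\}$ (the interior component containing $(1,1)$), and $V$ is continuous there. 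When $|\epsilon|>1$ I would read $\mathrm{Int}(C_{-2})$ through the projective completion, as in Remark~\ref{r:remarkaP4}. There the same argument works for the open set swept by the curves $\tilde C_h$, $h\in(-5/2,-2)$. I will treat the elliptic case carefully and note the other by analogy.

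\textbf{Step 2: a rational rotation interval.} By Proposition~\ref{p:dinamica-exa}(b), $\rho_\epsilon$ is analytic and strictly monotone on $(-5/2,-2)$. Its image is therefore a nondegenerate open interval $J$.
\begin{itemize}
\item To see that $J$ is nondegenerate, evaluate the endpoints. As $h\to-2^-$ we have $\sqrt{-4-2h}\to0$, so the argument tends to $0$ and $\rho_\epsilon\to 0$ or $1$. As $h\to-5/2^+$ the quotient tends to $(1-\epsilon^2-2i\epsilon)/(1+\epsilon^2)$, a point on the unit circle different from $1$.
\item Consequently $\rho_\epsilon^{-1}(\mathbb Q\cap J)$ is dense in $(-5/2,-2)$. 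This holds because $\rho_\epsilon$ is a homeomorphism onto $J$ and $\mathbb Q\cap J$ is dense in $J$.
\end{itemize}
For those $h$, all orbits of $\Phi_\epsilon|C_h$ are periodic, again by Proposition~\ref{p:dinamica-exa}(b). In particular $\mathcal P$ is nonempty.

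\textbf{Step 3: density.} Take any open set $U\subset\mathrm{Int}(C_{-2})$.
\begin{itemize}
\item Since $V$ is not constant on open sets, $V(U)$ contains a point $h_0\in(-5/2,-2)$.
\item By continuity, $V(U)$ then contains an interval around $h_0$. (An open connected piece of $U$ avoiding $(1,1)$ maps onto an interval, and this interval is nondegenerate because $V$ is nonconstant.)
\item That interval contains some $h$ with $\rho_\epsilon(h)\in\mathbb Q$, so $U$ meets $C_h\in\mathcal P$.
\end{itemize}
Hence $\bigcup\mathcal P$ is dense in $\mathrm{Int}(C_{-2})$.

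The main obstacle is Step 1: making the description of $\mathrm{Int}(C_{-2})$ rigorous, namely that $V$ takes exactly the values $[-5/2,-2)$ there, and handling the $|\epsilon|\ge1$ case, where the curves are not all ellipses. For the elliptic case I would verify directly that $C_h$ for $h<-2$ lies inside $C_{-2}$. The argument uses the sign of the defining pencil, which is linear in $h$ with coefficient $-(2\epsilon^2x^2+2)<0$. This gives monotone nesting in $h$.
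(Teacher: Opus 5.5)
Your proposal is correct and follows the paper's route. The paper's entire justification is the remark that, by Proposition~\ref{p:dinamica-exa}(b), $\rho_\epsilon$ is a nonconstant analytic (indeed monotone) function on $(-5/2,-2)$, so its image is a nondegenerate interval whose rational values produce $\mathcal P$; it leaves implicit the passage from density in $h$ to density in the plane, which you carry out in Steps 2--3.

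Your Step 3 only needs $V(\mathrm{Int}(C_{-2}))\subseteq[-5/2,-2)$ and the fact that $V$ is nonconstant on every open ball. That inclusion follows at once from $V\ge -5/2$ together with the sign argument you already give, since the pencil is linear in $h$ with coefficient $-(2\epsilon^2x^2+2)<0$. So the nesting of the ellipses and the projective treatment of $|\epsilon|\ge 1$ are not needed: there, read $\mathrm{Int}(C_{-2})$ as the component of $\{V<-2\}$ containing $(1,1)$ and the same argument applies verbatim.
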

The dense set of periodic orbits in the above result is, however, invisible to simulations since it has measure zero.

In the following result, we characterize the set of periods that can appear at different energy levels of $\Phi_{\epsilon}$ for a fixed value of $\epsilon$.

\begin{propo}\label{p:periodes-exa}
Consider a fixed $\epsilon\neq 0$. For any $p\geq \lfloor1/(1
-\rho_{c,\epsilon})\rfloor+1$  with $\rho_{c,\epsilon}:=\lim\limits_{h\to-5/2}\rho_{\epsilon}(h)$, 
there exists 
$h_p\in(-5/2,-2)$ such that $C_{h_p}$ is filled of $p$-periodic orbits.~\end{propo}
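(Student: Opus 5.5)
The plan is to make the rotation number function of Proposition~\ref{p:dinamica-exa}(b) explicit, find its image over $h\in(-5/2,-2)$, and then use the intermediate value theorem to hit a rational number with denominator $p$ in lowest terms. Set $s:=-(4+2h)\in(0,1)$ for $h\in(-5/2,-2)$. The number whose argument defines $\rho_\epsilon(h)$ is
$$\xi=\frac{1-s\epsilon^2-2i\epsilon\sqrt{s}}{1+s\epsilon^2}=\frac{(1-i\epsilon\sqrt{s})^2}{(1-i\epsilon\sqrt s)(1+i\epsilon\sqrt s)}=\frac{1-i\epsilon\sqrt s}{1+i\epsilon\sqrt s}.$$
So $|\xi|=1$ and $\arg\xi=-2\arctan(\epsilon\sqrt s)$.

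\textbf{Case $\epsilon>0$.} Here
$$\rho_\epsilon(h)=1-\tfrac{1}{\pi}\arctan\bigl(\epsilon\sqrt{-4-2h}\bigr).$$
This is continuous and strictly increasing in $h$ on $(-5/2,-2)$. It tends to $1$ as $h\to-2$ and to
$$\rho_{c,\epsilon}=1-\tfrac1\pi\arctan\epsilon\in(1/2,1)$$
as $h\to-5/2$. Hence $\rho_\epsilon\bigl((-5/2,-2)\bigr)=(\rho_{c,\epsilon},1)$.

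Now fix $p\ge\lfloor 1/(1-\rho_{c,\epsilon})\rfloor+1$. Then $p>1/(1-\rho_{c,\epsilon})$, which is equivalent to $(p-1)/p>\rho_{c,\epsilon}$, so $(p-1)/p\in(\rho_{c,\epsilon},1)$. By the intermediate value theorem (indeed by monotonicity) there is a unique $h_p\in(-5/2,-2)$ with $\rho_\epsilon(h_p)=(p-1)/p$. Since $\gcd(p-1,p)=1$, the number $\xi$ is a primitive $p$-th root of unity. Proposition~\ref{p:dinamica-moeb}(a), applied to the Möbius map $M_{h_p}$ conjugate to $\Phi_\epsilon|C_{h_p}$, then gives that every orbit of $\Phi_\epsilon$ on $C_{h_p}$ has minimal period $p$. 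This is the statement in Proposition~\ref{p:dinamica-exa}(b).

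\textbf{Case $\epsilon<0$.} By Lemma~\ref{l:rotmenysepsilon}, $\rho_{\epsilon}=1-\rho_{-\epsilon}$, which here equals $\frac1\pi\arctan(|\epsilon|\sqrt s)$. A rotation by $\rho$ and a rotation by $1-\rho$ are conjugate by an orientation-reversing map (the reflection $t\mapsto -t$ of the circle, or $\Phi_{-\epsilon}=\Phi_\epsilon^{-1}$). Hence the curves $C_h$ filled with $p$-periodic orbits, and their periods, are the same for $\epsilon$ and $-\epsilon$.

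The obstacle I expect is this: if $\rho_{c,\epsilon}$ is read literally for $\epsilon<0$, it is the small value $\frac1\pi\arctan|\epsilon|$, and then the threshold in the statement is not the natural one. The natural threshold would be $1/\rho_{c,\epsilon}$. I would resolve this by interpreting $\rho_{c,\epsilon}$ with the orientation normalized so that $\rho\to1$ as $h\to-2$, that is, by reducing to $|\epsilon|$ via the lemma. With that normalization the argument above for $\epsilon>0$ applies verbatim, with $p-1$ replaced by $1$ in the numerator. The rest is routine: the simplification of $\xi$ and the elementary inequality $(p-1)/p>\rho_{c,\epsilon}$.
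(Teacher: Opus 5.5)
Your argument is correct and is essentially the paper's. Both show that $\rho_\epsilon$ maps $(-5/2,-2)$ continuously onto $(\rho_{c,\epsilon},1)$ and then place a reduced fraction with denominator $p$ in that interval. Your explicit choice $(p-1)/p$, together with the closed form $\xi=(1-i\epsilon\sqrt{s})/(1+i\epsilon\sqrt{s})$, supplies the sufficiency direction that the paper leaves implicit: its write-up only shows that a reduced $q/p$ in the interval forces $p>1/(1-\rho_{c,\epsilon})$.

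Your concern about $\epsilon<0$ is well founded. The paper's proof asserts $\lim_{h\to-2}\rho_\epsilon(h)=1$ for all $\epsilon\neq0$. By its own formulas \eqref{e:rotationnumberexa1b}--\eqref{e:rotationnumberexa2b}, however, this limit is $0$ and $\rho_{c,\epsilon}=\frac{1}{\pi}\arctan|\epsilon|<\frac12$ when $\epsilon<0$. The literal threshold is then $2$, yet $p=2$ is never attained. Your reduction to $|\epsilon|$ via $\Phi_{-\epsilon}=\Phi_\epsilon^{-1}$, which gives the threshold $\lfloor 1/\rho_{c,\epsilon}\rfloor+1$, is the correct repair.
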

In the above result $\lfloor\,\rfloor$ stands for the floor function.

We now present a very simple example illustrating the application of the previous result. For $\epsilon = 0.01$ and $h \in \mathcal{R} = \left(-\frac{5}{2}, -2\right)$, the rotation number $\rho_{0.01}(h)$ is a monotonic function between $\rho_{c,0.01} =1-\arctan\left(200/9999\right)\approx 0.996817$ and $1$. According to Proposition~\ref{p:periodes-exa}, for every integer $p \geq 315$, there exists an energy level $h_p \in \mathcal{R}$ such that $C_{h_p}$ is filled with $p$-periodic orbits.

In addition, we stress that 
as a consequence of the  the results in  \cite{LlorMan},
we also obtain:
\begin{propo}\label{p:liesymmesuraexa}
The map $\Phi_{\epsilon}$ admits an associated Lie symmetry and preserves a measure absolutely continuous with respect to the Lebesgue measure.
\end{propo}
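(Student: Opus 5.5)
The plan is to apply Theorem 2 and Corollary 5 of \cite{LlorMan}. These require the map to be birational, to have a rational first integral, and to preserve a genus 0 fibration. Under those hypotheses they give a Lie symmetry of the form $X=\mu\,(-V_y\,\partial_x+V_x\,\partial_y)$, and hence an invariant measure with density $1/|\mu|$.

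First we check the hypotheses.
\begin{enumerate}[(i)]
\item $\Phi_\epsilon$ is birational. Its inverse is $\Phi_{-\epsilon}$, which is again rational.
\item $V$ is a rational first integral. This follows from Theorem 1.1 of \cite{Petrera19-2}.
\item The fibration is rational. Each $C_h$ is a conic, irreducible except at $h=-2+1/(2\epsilon^2)$ (Table~\ref{table:1}), so it has genus 0.
\end{enumerate}

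To make the statement concrete rather than purely existential, I would also construct $X$ explicitly, following the idea behind \cite{LlorMan}. On each $C_h$ the map is conjugate via $P_h$ to the M\"obius map $M_h$. A M\"obius transformation commutes with the flow of a quadratic vector field $g_h(t)\,\partial_t$ whose zeros are its fixed points. In the rotation case these are complex conjugate roots, and one can take $g_h(t)=ct^2+(d-a)t-b$ up to a factor depending on $h$.

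Pushing $g_h\,\partial_t$ forward by $P_h$ gives a field tangent to $C_h$ that commutes with $\Phi_\epsilon|_{C_h}$. Writing $t=P_h^{-1}(x,y)$ via \eqref{E_inversa-general} and $h=V(x,y)$, we obtain a vector field $X$ on an open set. By construction $X$ is tangent to the level curves of $V$, so $X=\mu(-V_y,V_x)$ with $\mu$ rational, and it satisfies the compatibility equation \eqref{e:Lie-Symm}.

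The main obstacle is choosing the $h$-dependent normalization of $g_h$ so that $\mu$ is smooth and nonvanishing on a useful open set. A natural guess, which the computation should confirm or correct, is that a symmetry is given by the original field \eqref{e:cassenzill} up to a factor depending on $V$. The check would be a direct symbolic verification of \eqref{e:Lie-Symm} for a candidate of the form $k(V)\,\tilde\mu(x,y)(-V_y,V_x)$.

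Once $X$ is obtained, the invariant measure follows from Theorem 12(ii) of \cite{CGM08}, with density $\nu=1/|\mu|$ on any open set where $\mu\neq0$.
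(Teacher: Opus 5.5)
Your proposal matches the paper's proof. The paper takes exactly $Y_h=(t^2+(d-a)t-b)\,\partial_t$, your $g_h$ with $c=1$ and no extra normalization, as the Lie symmetry of $M_h$ (Lemma~3 of \cite{LlorMan}), pushes it forward by $P_h$ with $h=V(x,y)$ (Theorem~2 there), checks \eqref{e:Lie-Symm} by computer algebra, and gets $\mu=-2x(\epsilon^2x^2+1)$, hence density $\nu=1/(2|x|(\epsilon^2x^2+1))$ off $\{x=0\}$.

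Two small corrections. Drop the guess that \eqref{e:cassenzill} times a function of $V$ is a symmetry: that field is tangent to the levels of $H$, not of $V$, and the symmetry obtained only tends to $-2$ times it as $\epsilon\to0$. Also, rationality of $\mu$ is not automatic ``by construction'', since $P_h$ involves $m=\sqrt{(\epsilon^2+1)(2h+5)}$, although here it does turn out rational.
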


The expression of the Lie symmetry  and the density of the invariant measure mentioned in the above result are given in  \eqref{e:liesymexa} and \eqref{e:mesuraexa}, respectively.

\subsection{Proof of Propositions \ref{p:dinamica-exa}, \ref{p:periodes-exa} and \ref{p:liesymmesuraexa}}\label{ss:proof-Petrera-Suris-example}

In order to prove the results in the previous section, first we apply the scheme in Section \ref{s:method}.

\noindent \emph{Step 1.} We use the method of \emph{parametrization by lines} to obtain the proper parametrization, see \cite[Section
4.6]{SWPD}. We consider the base point
$(x_0,y_0)=(1,m+1)\in C_h,$
where $$m^2=(\epsilon^2+1)(2h+5).$$  We introduce the parameter $m$ to avoid working with expressions involving radicals. By introducing the new variables $x = u + x_0$ and $y = v + y_0$, we shift this point to the origin. In these new variables, each curve is defined by the equation $f_1(u,v) + f_2(u,v) = 0$, where $f_k$ denotes the homogeneous component of degree $k$. In our case:
\begin{align*}
f_1(u,v)&=-2 m \left((m -1)\epsilon^{2}-1\right) u +2 m \left(\epsilon^{2}+1\right) v,\\
f_2(u,v)&=\left((2-m^{2})\epsilon^{2} +2\right) u^{2}+2\left( \epsilon^{2}+1\right) u v +\left(\epsilon^{2}+1\right) v^{2}
\end{align*}
We compute the intersection points of these curves  with the the
lines $v=t\,u$, by solving
\begin{displaymath}
 \left\{ \begin{array}{l}
 v=t u,\\
 f_2(u,v)+f_1(u,v)=0,
  \end{array} \right.
\end{displaymath}
obtaining an affine parametrization $(u(t),v(t))$, so that the
parametrization of the corresponding curve $C_h$ is the 
one given by $P_h(t)=(P_{1,h}(t),P_{2,h}(t))=(u(t)+x_0,v(t)+y_0)$
where
         $$P_{1,h}(t)=\frac{\left(\epsilon^{2}+1\right) t^{2}-2 \left(\epsilon^{2}+1\right) \left(m -1\right) t +\epsilon^{2} m^{2}-2 \epsilon^{2} m +2 \epsilon^{2}-2 m +2}{\left(\epsilon^{2}+1\right) t^{2}+\left(2 \epsilon^{2}+2\right) t -\epsilon^{2} m^{2}+2 \epsilon^{2}+2},$$
         $$P_{2,h}(t)=-\frac{\left(\epsilon^{2}+1\right) \left(m -1\right) t^{2}-2\left( (m^{2}+1) \epsilon^{2}+1\right) t +\left(m +1\right) \left(\epsilon^{2} m^{2}-2 \epsilon^{2}-2\right)}{\left(\epsilon^{2}+1\right) t^{2}+\left(2 \epsilon^{2}+2\right) t -\epsilon^{2} m^{2}+2 \epsilon^{2}+2}.$$

To verify that the parametrization is proper, we can simply compute its inverse and check that it results in a rational function. Using Equation \eqref{E_inversa-general}, we obtain
        $${P_h}^{-1}(x,y)=\frac{\left(\epsilon^{2} (m^{2}-2 )-2\right) x +\left(\epsilon^{2} (m -1)-1\right) y +\epsilon^{2} (3 -2m)-m +3}{\left(\epsilon^{2} (m +1)+1\right) x +\left(\epsilon^{2}+1\right) y -2 \epsilon^{2}+m -2}
$$

\noindent \emph{Step 2.} By using Equation \eqref{e:conjmoeb} we get that on each curve $C_h$ the map $\Phi_{\epsilon}$ is conjugated to:
$$M_{h,\epsilon}(t) = P_h^{-1} \circ \Phi_{\epsilon}{|C_h} \circ P_h(t)=\frac{at+b}{t+d}$$ with
$$
a=\frac{\epsilon  m -\epsilon +1}{\epsilon};\,b=-\frac{\epsilon^{2} (m^{2}-2 m +2 )-2 m +2}{\epsilon^{2}+1},\, d=-\frac{\epsilon  m -\epsilon -1}{\epsilon}.
$$
Now, we can study the map $M_{h,\epsilon}$, thus obtaining the classification of the dynamics $\Phi_{\epsilon}|C_h$ by using Proposition \ref{p:dinamica-moeb}.

\begin{proof}[Proof of Proposition \ref{p:dinamica-exa}]
We consider the map $M_h$ obtained in the previous paragraphs. From now on, we will work with the value of the energy $h$ instead of the auxiliary parameter $m$. We recall that the allowed energy levels are $h\geq -5/2$.
A computation shows that, using the notation of   Proposition \ref{p:dinamica-moeb}:
$$
\Delta=8  \left(h +2\right)
\mbox{ and }
\xi=\frac{1+\left(4+2 h \right) \epsilon^{2}-i\,
2\epsilon  \sqrt{-4-2h}}{1-(4+2h)\epsilon^2}.
$$ 

For $-5/2<h<-2$, $\Delta<0$ and $M_h$ is conjugated to a rotation with rotation number
$\rho_{\epsilon}(h)=\arg(\xi)/(2\pi).$  In equations \eqref{e:rotationnumberexa1}--\eqref{e:rotationnumberexa2b}, we provide the explicit expressions for this function in terms of $\epsilon$, and in \eqref{e:derivada}, we present its derivative. As a result, the monotonicity of $\rho_{\epsilon}(h)$ is established. For $h \geq 2$, the statements are readily obtained by applying Proposition \ref{p:dinamica-moeb}.

Determining which of the points $P_{\pm}$ is attracting or repelling can be done through, relatively, simple calculations.~\end{proof}

We can obtain a more explicit expression of the rotation number function. Set
$$
\Theta_{\epsilon}(h):=\arctan\Bigg(-\dfrac{2\, \epsilon \sqrt{-4-2 h}}{1+\left(4+2 h \right) \epsilon^{2}}\Bigg),
$$ where we use the \emph{standard determination of the arctangent} in $[-\pi/2,\pi/2]$. We distinguish the following cases cases: 

If $0<\epsilon\leq 1$, it is easy to verify that $\arg(\xi)$ is always an angle in the fourth quadrant. Thus we obtain:
\begin{equation}\label{e:rotationnumberexa1}
\rho_{\epsilon}(h)=
  1+\dfrac{\Theta_{\epsilon}(h)}{2\pi}.
\end{equation} 
To study the case $\epsilon> 1$ (which might make little sense if we think of $\Phi_{\epsilon}$ as a discretization of an ODE), we  consider the function
$$
f(\epsilon)=-\frac{1+4\epsilon^2}{2\epsilon^2},
$$
whose range for $|\epsilon|> 1$ is $\left(-5/2,-2\right)$. Then, a quick verification shows that $\arg(\xi)$ is an angle in the third quadrant if $-5/2<h<f(\epsilon)$; $3\pi/2$ when $h=f(\epsilon)$; and an angle in the fourth quadrant when $f(\epsilon)<h<-2$, which gives:
\begin{equation}\label{e:rotationnumberexa2}
\rho_{\epsilon}(h)=\begin{cases}
 \dfrac{1}{2}+ \dfrac{\Theta_{\epsilon}(h)}{2\pi}  & \text{for } -\frac{5}{2}<h<-\frac{1+4\epsilon^2}{2\epsilon^2}.\\
{3}/{4} & \text{for } h=-\frac{1+4\epsilon^2}{2\epsilon^2},\\
 1+\dfrac{\Theta_{\epsilon}(h)}{2\pi} & \text{for } -\frac{1+4\epsilon^2}{2\epsilon^2}<h<-2. \\
\end{cases}
\end{equation} 

Taking into account Lemma \ref{l:rotmenysepsilon}, or a direct computation, we have that if $-1\leq\epsilon<0$, then 
\begin{equation}\label{e:rotationnumberexa1b}
\rho_{\epsilon}(h)=\dfrac{\Theta_{\epsilon}(h)}{2\pi},
\end{equation} 
and when $\epsilon<- 1$, then 
\begin{equation}\label{e:rotationnumberexa2b}
\rho_{\epsilon}(h)=\begin{cases}
 \dfrac{1}{2}+ \dfrac{\Theta_{\epsilon}(h)}{2\pi}  & \text{for } -\frac{5}{2}<h<-\frac{1+4\epsilon^2}{2\epsilon^2}.\\
{1}/{4} & \text{for } h=-\frac{1+4\epsilon^2}{2\epsilon^2},\\
\dfrac{\Theta_{\epsilon}(h)}{2\pi} & \text{for } -\frac{1+4\epsilon^2}{2\epsilon^2}<h<-2. \\
\end{cases}
\end{equation}

For all the cases, a computation shows that
\begin{equation}\label{e:derivada}
\Theta_{\epsilon}'(h)=\frac{\epsilon}{\pi\left(1-2\epsilon^2(h+2)\right)\sqrt{-4-2h}}.
\end{equation}
Observe that $1 - 2\epsilon^2(h+2) > 0$ for $h \in (-5/2, -2)$. Consequently, $\rho_{\epsilon}(h)$ is a monotonic function, increasing when $\epsilon > 0$ and decreasing when $\epsilon < 0$.

\begin{proof}[Proof of Proposition \ref{p:periodes-exa}]
(a) From Proposition \ref{p:dinamica-exa} we have that for 
$-5/2<h<-2$, the map $\Phi_{\epsilon}|C_h$ is conjugated to a rotation with
an explicit rotation number given by one of the formulas \eqref{e:rotationnumberexa1}-- \eqref{e:rotationnumberexa2b}. 
The rotation number $\rho_{\epsilon}(h)$ is a continuous function of $h\in(-5/2,-2)$. 
It is easy to check that$
\lim\limits_{h\to-2}\rho_{\epsilon}(h)=1$   and
since  for all $\epsilon\neq 0$, $\rho_{c,\epsilon}=\lim\limits_{h\to-5/2}\rho_{\epsilon}(h)\neq 1$, the interval $\mathcal{I}:=(\rho_{c,\epsilon},1)$ is a non-degenerate  interval. 
This implies that there exists $p_0\in \mathbb{N}$ such that for all natural number $p\geq p_0$ there is an energy level $h_p\in(-5/2,-2)$ and 
 an irreducible fraction $q/p\in \mathcal{I}$, such that $\rho_{\epsilon}(h_p)=q/p$. 
 
Following the steps of the proof of Proposition 9 from \cite{LlorMan}, we can give an explicit expression of such a bound $p_0$. Indeed, if $q/p$ is an irreducible fraction in $\mathcal{I}$, then 
 $$
\rho_{c,\epsilon}<\frac{q}{p}\leq \frac{p-1}{p}<1.
$$
Hence $p>1/(1-\rho_{c,\epsilon})$, and therefore
$p\geq p_0:=\left\lfloor\frac{1}{1-\rho_{c,\epsilon}}\right\rfloor+1.$
\end{proof}

\begin{proof}[Proof of Proposition \ref{p:liesymmesuraexa}]
 According to \cite[Lemma 3]{LlorMan}, for each energy level $h$, each map $M_h$ has the Lie symmetry
$$Y_h(t)=\left(
 t^{2}+2 \left(1-m\right) t
+2 \left(1-m\right)  +\frac{\epsilon^{2} m^{2}}{\epsilon^{2}+1} \right)\dfrac{\partial}{\partial t}.$$
Then, using Theorem 2 in that reference, there is a global Lie symmetry given by $\tilde{X}=\tilde{X}_1\partial/\partial x +\tilde{X}_2\partial/\partial y$ with
\begin{equation}\label{e:liesymexa}
\begin{array}{l}
\tilde{X}_1(x,y)=\left.P_{1,h}'(P_h^{-1}(x,y))Y_h(P_h^{-1}(x,y))\right|_{h=V(x,y)}={X_{11}}/{X_{12}} \mbox{ and}\\
\tilde{X}_2(x,y)=\left. P_{2,h}'(P_h^{-1}(x,y))Y_h(P_h^{-1}(x,y))
\right|_{h=V(x,y)}={X_{21}}/{X_{22}},
  \end{array}
\end{equation}
where a computation using a computer algebra software gives:
\begin{align*}
X_{11}&=-2A \left(\epsilon^{2} x^{2}+1\right)  x \left(x +y -2\right)\left(\left(A \epsilon^{2}+\epsilon^{2}+1\right) x +\left(\epsilon^{2}+1\right) y -2 \epsilon^{2}+A -2\right),\\
X_{12}&=
\left(\epsilon^{2}+1\right) \left(\epsilon^{2} \left(A +2\right) x^{3}+\epsilon^{2} \left(A +2\right) x^{2} y +\left(2-2\left(A +3\right) \epsilon^{2}\right) x^{2}+\epsilon^{2} x y^{2}\right.\\
&\left.+\left(-4 \epsilon^{2}+2\right) x y +\left(5 \epsilon^{2}+A -6\right) x +y^{2}+\left(A -4\right) y -2 A +5\right),
\end{align*}
and
\begin{align*}
X_{21}&=-2 x \left(\epsilon^{2} x^{2} y +\epsilon^{2} x y^{2}-3 \epsilon^{2} x^{2}-4 \epsilon^{2} x y +\left(5 \epsilon^{2}-2\right) x -y +3\right),\\
X_{22}&=\epsilon^{2} x^{2}+1,
\end{align*}
with
$$
A=\sqrt{\frac{\left(\epsilon^{2}+1\right) \left(2 x^{2}+2 x y +y^{2}-6 x -4 y +5\right)}{\epsilon^{2} x^{2}+1}}.
$$

A computation, using a computer algebra software,  gives that the above vector field satisfies the compatibility condition $\tilde{X}|_{\Phi_{\epsilon}}=D\Phi_{\epsilon}\,\tilde{X}$. 

The field $\tilde{X}$ preserves the energy levels of the first integral $V$; thus, it can be written as  
$$
\tilde{X} = \tilde{X}_1 \frac{\partial}{\partial x} + \tilde{X}_2 \frac{\partial}{\partial y} = \mu(x,y) \left( V_y(x,y) \frac{\partial}{\partial x} - V_x(x,y) \frac{\partial}{\partial y} \right).
$$ 
It follows that  
$$
\mu = -\frac{\bar{X}_2}{V_x} = -2x \left( \epsilon^{2} x^{2} + 1 \right).
$$ 
The factor $\mu$ is related to the existence of invariant measures that are absolutely continuous with respect to the Lebesgue measure, with density $\nu = 1/|\mu|$ in $\mathbb{R}^2 \setminus \{ \mu(x,y) = 0 \}$, see \cite{CGM08, LlorMan}. Consequently, we obtain the density 
\begin{equation}\label{e:mesuraexa}
\nu = \frac{1}{2 |x| \left( \epsilon^{2} x^{2} + 1 \right)}
\end{equation}  
in $\mathbb{R}^2 \setminus \{ x = 0 \}$.

\end{proof}

\section{KHK maps associate with isochronous quadratic planar vector fields}\label{s:isocrons}

An \emph{isochronous center} of a planar vector field is a singular point surrounded by periodic orbits  that have the same period. Vector fields in the plane with isochronous centers are interesting from the perspective of their geometric properties, which include, among others, the existence of commuting vector fields \cite{Sab97a,Sab97b}, linearizations \cite{MRT,MS}, and of inverse integrating factors \cite{CGG} that allow finding their first integrals.

The quadratic vector fields with isochronous centers have been obtained by Loud \cite{L}, and latter studied by Sabatini \cite{Sab99}. Their interesting properties are compiled in the survey paper \cite{ChaSab}. There are four such systems which, using the notation from this survey, are:

\begin{table}[H]
$$
\begin{array}{lccl}
\phantom{xxxxxxxx} \mbox{System} & \phantom{xxxx} \mbox{First integral} & & \phantom{xx} \mbox{Linearization}\\
\hline\\
S_1:\,\left\{ 
 \begin{array}{l}
 \dot{x}=-y+x^2-y^2\\
 \dot{y}=x(1+2y)
 \end{array}
 \right.  
 &
 H_1=\dfrac{x^2+y^2}{1+2y}
 & &\begin{array}{l}
u=\frac{x^{2}+y^{2}+y}{\left(y +1\right)^{2}+x^{2}}\\
v=-\frac{x}{\left(y +1\right)^{2}+x^{2}}
\end{array}
\\ \\
S_2:
\,
 \left\{ 
 \begin{array}{l}
 \dot{x}=-y+x^2\\
 \dot{y}=x(1+y)
 \end{array}
 \right.  
 &
 H_2=\dfrac{x^2+y^2}{(1+y)^2}
 & &\begin{array}{l}
u=\frac{x}{1+y}\\
v=-\frac{y}{1+y}
\end{array}
\\ 
\\ 
S_3:
\,
 \left\{ 
 \begin{array}{l}
 \dot{x}=-y-\frac{4}{3}x^2\\
 \dot{y}=x\left(1-\frac{16}{3}y\right)
 \end{array}
 \right.  
 &
 H_3=\dfrac{9(x^2+y^2)-24x^2y+16x^4}{-3+16y}
 & &\begin{array}{l}
u=\frac{3x}{9-24y+32x^2}\\
v=\frac{3y-4x^2}{9-24y+32x^2}
\end{array}
 \\ \\
S_4:
\,
\left\{ 
 \begin{array}{l}
 \dot{x}=-y+\frac{16}{3}x^2-\frac{4}{3}y^2\\
 \dot{y}=x\left(1+\frac{8}{3}y\right)
 \end{array}
 \right.  
 &
 H_4=\dfrac{9(x^2+y^2)+24y^3+16y^4}{(3+8y)^4}
  & &
\begin{array}{l}
u=\frac{3x}{(3+8y)^2}\\
v=\frac{3y+4y^2}{(3+8y)^2}
\end{array}\\
\\
\hline
\end{array}$$\caption{Planar quadratic isochronous fields with isochronous centers, \cite{ChaSab}.}\label{table:2}
\end{table}

In the context of the present work, it was natural to study the KHK maps associated with the vector fields $S_1$, $S_2$, and $S_3$, whose invariant fibrations are given by the level sets of the first integrals $H_1$, $H_2$, and $H_3$, respectively, since these fibrations are composed of genus $0$ curves. It is worth noting that the fibration of invariant curves associated with $H_4$ is generically of genus $1$ and, therefore, a priori, this case lies outside the scope of this article, although we have also taken this case into consideration.

To our surprise, the situation seems to be  complex: while in the case of $S_1$, the associated KHK map exhibits remarkable integrability properties, including  the existence of two functionally independent first integrals (\emph{complete integrability}) for an open and dense set of values of the integration step $\epsilon$, for which the maps are globally periodic (Section \ref{ss:S1}), the evidences we have gathered suggest that, in the cases of $S_2$, $S_3$ and even $S_4$, the KHK maps are not integrable 
(Section~\ref{ss:S2}).

\subsection{Detailed analysis of the KHK map associated to the quadratic vector field~$S_1$}\label{ss:S1}

We provide a detailed analysis of the case $S_1$. This case is particularly rich: the KHK map associated to $S_1$, namely $\Phi_{1,\epsilon}$, has the same first integral as the original vector field, which, in turn, is a Lie symmetry of the map. Furthermore, 
the rotation number is constant for all solutions in each central basin. Hence it depends only on the parameter $\epsilon$, representing an analogue of the isochronicity of the field (Proposition~\ref{p:dinamicaS1}).
Finally, the original isochronous vector field admits a commuting vector field, and the KHK map associated with this commuting vector field commutes with the KHK map of the isochronous one.

What is most remarkable is that there exists an open and dense set of values of $ \epsilon $ in $ \mathbb{R} $ for which $ \Phi_{1,\epsilon} $ is globally periodic (Proposition \ref{p:periodesS1}).
We think that this fact is noteworthy in the context of the study of global periodicity, because globally periodic nonlinear maps are rare. Global periodicity occurs only for very special maps with as many functionally independent first integrals as the dimension of the phase space  \cite{CGM06}, which makes them difficult to construct and therefore uncommon. In general, in the known cases, given a parametric family of maps $F_\lambda$ with 
$\lambda \in \mathbb{R}$, global periodicity, when it occurs, typically appears only for a finite set of parameter values and therefore yields a finite set of possible periods \cite{CGMs04,CGMM16}.

\subsubsection{Dynamics of the KHK map $\Phi_{1,\epsilon}$}

According to the terminology in \cite{ChaSab}, the $S_1$ isochronous vector field is 

\begin{equation}\label{e:S1}
S_1=\left(-y+x^2-y^2\right)\dfrac{\partial }{\partial x}+
x\,(1+2y)   \dfrac{\partial }{\partial y}.
\end{equation}
With a slight abuse of notation, we use the same name, $S_1$, to denote both the vector field and the corresponding differential system.

The phase portrait is simple: $S_1$ has the associated first integral $H_1$,
whose \emph{range} is $\mathcal{H}=\{h\leq-1\mbox{ and }h\geq 0\}$, and whose level curves
$$
C_h=\{x^2+y^2-h(1+2y)=0\}
$$ are circles, each of them centered at the point $(0,h)$ with  radius $\sqrt{h^2+h}$. These circles  
surround the isochronous center 
$O_1=(0,0)$ for $h\geq 0$ and $O_2=(0,-1)$ for $h\leq -1$, forming the \emph{basins} of both centers.  The separatrix between these two central basins is the straight line $C_{\infty} = \{ y = -1/2 \}$, which is invariant under the flow of $S_1$. See Figure \ref{f:figura1}.

\begin{figure}[ht]
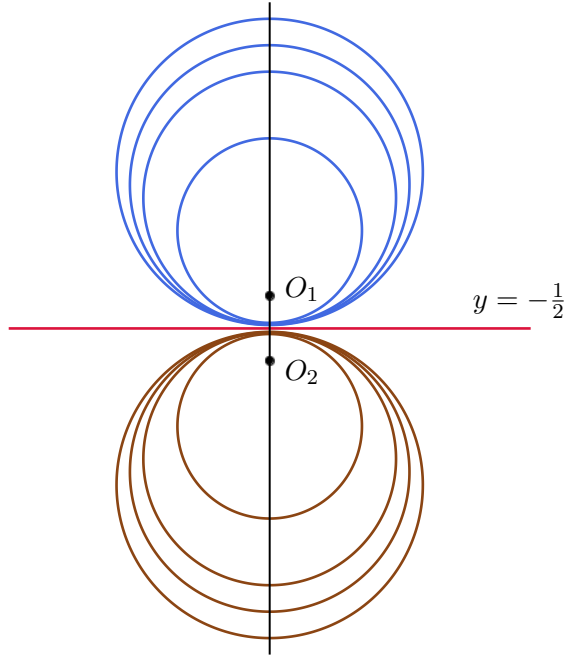


	\centering
	
\begin{lpic}[l(2mm),r(2mm),t(2mm),b(2mm)]{figura1(0.50)}
		
\lbl[l]{85,103; $O_1$}
\lbl[l]{85,81; $O_2$}	
\lbl[l]{135,100; $y=-\frac{1}{2}$}
		
\end{lpic}
	
\caption{Scheme of the energy level curves $C_h$ surrounding the center $O_1=(0,0)$ for $h\geq 0$ (in blue), and the center $O_2=(0,-1)$ for $h\leq -1$ (in brown); The separatrix of the central basins $y=-1/2$ (in red).}\label{f:figura1}
\end{figure}

The KHK-map associated to \eqref{e:S1} is
\begin{align}
\Phi_{1,\epsilon}(x,y)=&\left(\frac{-2 \epsilon  \,x^{2}+\left(1-\epsilon^{2}\right) x -2 \epsilon  \,y^{2}-2 \epsilon  y}{4 \epsilon^{2} x^{2}+4 \epsilon^{2} y^{2}+4 \epsilon^{2} y +\epsilon^{2}-4 \epsilon  x +1}\right., \label{e:PhiS1}\\ 
&\left.\frac{-2 \epsilon^{2} x^{2}+2 \epsilon  x -2 \epsilon^{2} y^{2}+\left(1-\epsilon^{2}\right) y}{4 \epsilon^{2} x^{2}+4 \epsilon^{2} y^{2}+4 \epsilon^{2} y +\epsilon^{2}-4 \epsilon  x +1}\right).\nonumber
\end{align}

It is easy to check that $H_1$, the first integral of system $S_1$,  is also a first integral of $\Phi_{1,\epsilon}$. One can also easily obtain that it also admits the following family of first integrals: 
$$
H_{\alpha,\beta,\gamma,\eta}(x,y)=\frac{\alpha( x^{2} +y^{2}) +\beta( 2y +1)}{\gamma(x^{2} + y^{2})+\eta( 2y +1)}.
$$ 
Note that $H_{1,0,0,1}(x,y)=H_1(x,y)$.
Also, a quick verification shows that it is satisfied  $S_1|_{\Phi_{1,\epsilon}}=D\Phi_{1,\epsilon}\,S_1$, which means that $S_1$ is a \emph{Lie symmetry} of the map $\Phi_{1,\epsilon}$. As a consequence, the map $\Phi_{1,\epsilon}$ has an invariant measure \cite{CGM08}.

The vector field $S_1$ has the global orthogonal commuting field 
$$
Y_1=S_1^\perp=x\,(1+2y)\dfrac{\partial }{\partial x}+ (y-x^2+y^2) \dfrac{\partial }{\partial y}.
$$ Furthermore, if $\Psi_{1,\delta}$ is the KHK map associate to the commuting field $Y_1$ (we omit the expression), then, a quick verification shows that, $\Psi_{1,\delta}\circ \Phi_{1,\epsilon}=\Phi_{1,\epsilon}\circ\Psi_{1,\delta}$.  
We summarize all these properties of the map $\Phi_{1,\epsilon}$ in the following proposition:

\begin{propo}\label{p:propS1}
Consider the quadratic vector field $S_1$ and its associated KHK map $\Phi_{1,\epsilon}$,  the following statements hold.
\begin{itemize}
\item[(a)] The first integral $H_1$ of $S_1$ is also a first integral of  $\Phi_{1,\epsilon}$. 
\item[(b)] The vector field $S_1$ is a Lie symmetry of the map $\Phi_{1,\epsilon}$. 
\item[(c)] The map $\Phi_{1,\epsilon}$ has an invariant measure with density $\nu=1/{\left(1+2 y \right)^{2}}$. 
\item[(d)] The KHK map of $S_1$ commutes with the KHK map of its commutator $Y_1$. That is,  for all $\epsilon,\delta\in\mathbb{R}$ the maps 
$\Phi_{1,\epsilon}$ and $\Psi_{1,\delta}$ commute. 
\end{itemize}
\end{propo}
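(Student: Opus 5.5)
The four statements are mostly direct verifications with the explicit formula \eqref{e:PhiS1}. To keep the algebra manageable, I will use the linearization of $S_1$ from Table~\ref{table:2}. Write $\Phi=\Phi_{1,\epsilon}$ and let $D(x,y)=4\epsilon^2(x^2+y^2+y)+\epsilon^2-4\epsilon x+1$ be its common denominator.

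\emph{Statement (a).} I will show that $\Phi^*(x^2+y^2)=\lambda\,(x^2+y^2)$ and $\Phi^*(1+2y)=\lambda\,(1+2y)$ for the same rational factor $\lambda$. A natural guess is $\lambda=(1+\epsilon^2)/D$, to be confirmed by expanding numerators. For $1+2y$ this is short: the numerator of $1+2\Phi_2$ is
\begin{equation*}
D-4\epsilon^2(x^2+y^2)+4\epsilon x+2(1-\epsilon^2)y=(1+\epsilon^2)(1+2y).
\end{equation*}
For $x^2+y^2$ one must check that $\Phi_1^2+\Phi_2^2=(1+\epsilon^2)(x^2+y^2)/D$. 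I would do this with the complex variable $z=x+iy$, in which the numerators factor nicely. Once both identities hold, $H_1\circ\Phi=H_1$ follows. The same two identities give every $H_{\alpha,\beta,\gamma,\eta}$ at once.

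\emph{Statements (b) and (c).} For (b) I would avoid checking \eqref{e:Lie-Symm} directly and pass to the linearizing coordinates $(u,v)$ of Table~\ref{table:2}. In these coordinates $S_1$ becomes a linear rotation field $\dot u=-v,\ \dot v=u$, up to sign and orientation to be confirmed. The plan is to compute $\Phi$ in $(u,v)$ and show it is a linear map $R$ commuting with that rotation. The map should be the KHK map of the linear field, that is, the Cayley transform $(I-\epsilon A)^{-1}(I+\epsilon A)$, which is a rotation with $\tan(\theta/2)=\epsilon$. A linear map commuting with $A$ has $A$ as a Lie symmetry, and pulling back by the diffeomorphism gives $S_1$ as a Lie symmetry of $\Phi$. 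If the conjugacy turns out not to be exactly linear, I will fall back on symbolic verification of $S_1\circ\Phi=D\Phi\cdot S_1$.

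For (c), write $S_1=\mu(-H_{1,y},H_{1,x})$. Since $\nabla H_1=(2x(1+2y),\,2y+2y^2-2x^2)/(1+2y)^2$, we get $\mu=(1+2y)^2/2$. The result recalled in Section~\ref{s:method} (Theorem 12(ii) of \cite{CGM08}) then gives density $1/|\mu|$, which is proportional to $1/(1+2y)^2$. Constants do not matter.

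\emph{Statement (d) and the main obstacle.} In the $(u,v)$ coordinates, $Y_1=S_1^\perp$ should become the radial field $u\partial_u+v\partial_v$, possibly up to sign. Its KHK map is then a homothety, and a homothety commutes with a rotation. However, KHK maps are not coordinate-invariant under nonlinear changes, so $\Psi_{1,\delta}$ need not be that homothety. For this reason I would verify commutation of the explicit rational maps $\Psi_{1,\delta}\circ\Phi$ and $\Phi\circ\Psi_{1,\delta}$ by computer algebra, treating the linear picture only as a heuristic. The same caveat is the main obstacle for (b): whether $\Phi$ is exactly linear in $(u,v)$. I expect the circle structure of $C_h$ and the factor $(1+\epsilon^2)/D$ found above to make this true, since they indicate that $\Phi$ acts as a Möbius map in $z$.
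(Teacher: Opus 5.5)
Your proposal is correct, and the identities you predict hold. In $z=x+iy$ the numerator of $\Phi_1+i\Phi_2$ factors as $(1+i\epsilon)z\,[(1+i\epsilon)-2\epsilon\bar z]$ and $D=|(1+i\epsilon)-2\epsilon\bar z|^2$, which gives the common multiplier $(1+\epsilon^2)/D$ in (a); your $\mu=(1+2y)^2/2$ in (c) is also right.

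The paper handles (a), (b) and (d) by asserting direct symbolic verifications, and deduces (c) from (a), (b) and \cite[Theorem 12(ii)]{CGM08}. So for (a), (c) and your computer-algebra plan for (d) you coincide with it. For (b) you go through the linearization instead, which is the argument of Proposition~\ref{p:lemaphil}(a) that the paper only brings in later, via Corollary~\ref{c:pseudoKHKquadratic}(a).

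Your hedge in (b) is unnecessary. $L(z)=z/(z+i)$ is a M\"obius map, and $S_1$ is the real form of $\dot z=z(z+i)$. Moreover $\Phi_{1,\epsilon}(z)=(1+i\epsilon)z/(1-i\epsilon-2\epsilon z)$ is M\"obius and fixes $0$ and $-i$. Hence $L\circ\Phi_{1,\epsilon}\circ L^{-1}$ fixes $0$ and $\infty$ with multiplier $(1+i\epsilon)/(1-i\epsilon)$, so it is exactly the Cayley rotation.

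The same viewpoint makes (d) conceptual, so your caveat there is too cautious. For a holomorphic quadratic field $\dot z=f(z)$, formula \eqref{e:phi} reads $z\mapsto z+2\epsilon f(z)/(1-\epsilon f'(z))$. This Kahan map of a Riccati equation is covariant under M\"obius changes of variable, which you can check on translations, scalings and $z\mapsto 1/z$. Since $Y_1$ is the real form of $\dot z=-iz(z+i)$, which $L$ sends to $\dot w=w$, you get $\Psi_{1,\delta}=L^{-1}\circ\bigl(\tfrac{1+\delta}{1-\delta}\,\cdot\bigr)\circ L$ for $\delta\neq 1$. This obviously commutes with the rotation.

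So your route explains all four items, and the $h$-independent rotation number of Proposition~\ref{p:dinamicaS1}, from one structural fact. The paper's route is shorter but rests on computer-algebra checks it does not display.
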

The dynamics of $\Phi_{1,\epsilon}$ is summarized in the following result:

\begin{propo}\label{p:dinamicaS1}
For all fixed $\epsilon\in \mathbb{R}$ and $h\in\mathcal{H}\cup\{\infty\}$, the map $\Phi_{1,\epsilon}|C_h$ is conjugated to  rotation with rotation number
\begin{equation}\label{e:rotnumbers1}
\rho_{\pm}(\epsilon)=\frac{1}{2\pi}\arg\left(\frac{1-\epsilon^2}{1+\epsilon^2}\pm i\,\frac{2\epsilon}{1+\epsilon^2}\right),
\end{equation}
where $\rho_{+}$ holds for $h\geq 0$, that is the circles surrounding the point $O_1=(0,0)$ and for the invariant extended line $C_{\infty}=\{y=-1/2\}\cup\{\infty\}\cong \mathbb{S}^1$; and  $\rho_{-}$  for $h\leq -1$, that is the circles surrounding the point $O_2=(0,-1)$. 
\end{propo}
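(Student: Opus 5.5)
The plan is to apply the two-step methodology of Section~\ref{s:method} directly to the pencil of circles $C_h=\{x^2+y^2-h(1+2y)=0\}$. The key expectation is that the resulting M\"obius map $M_h$ has invariants that do not depend on $h$. The case $\epsilon=0$ is trivial, since $\Phi_{1,0}=\mathrm{Id}$ and $\rho_\pm(0)=0$. By Lemma~\ref{l:rotmenysepsilon}, and since $\rho_\pm(-\epsilon)=1-\rho_\pm(\epsilon)$ from \eqref{e:rotnumbers1}, it is enough to treat $\epsilon>0$; I would still keep $\epsilon$ general in the symbolic computation. By Proposition~\ref{p:propS1}(a), each $C_h$ is invariant, so the restriction makes sense.

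\emph{Step 1 (parametrization).} Each $C_h$ with $h\in\mathcal{H}$, $h\neq 0,-1$, is a circle with center $(0,h)$ and radius $r$, where $r^2=h^2+h$. To avoid radicals I introduce an auxiliary parameter $m$ with $m^2=h^2+h$, as was done in Section~\ref{ss:proof-Petrera-Suris-example}. I then take the base point $(0,h-m)\in C_h$ and parametrize by lines, $x=u$, $y=h-m+tu$, which gives
$$P_h(t)=\Big(\frac{2mt}{1+t^2},\,h-m+\frac{2mt^2}{1+t^2}\Big).$$
The inverse is $P_h^{-1}(x,y)=(y-h+m)/x$, obtained via \eqref{E_inversa-general} or simply read off from the construction.

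\emph{Step 2 (M\"obius map).} Next I compute $M_h=P_h^{-1}\circ\Phi_{1,\epsilon}\circ P_h$ using \eqref{e:PhiS1} and simplify with $m^2=h^2+h$ in a computer algebra system. Common factors must cancel so that $M_h(t)=(at+b)/(ct+d)$. I then evaluate $\Delta=(d-a)^2+4bc$ and $\xi$ from Proposition~\ref{p:dinamica-moeb}. The expected outcome is $\Delta<0$ with
$$\xi=\frac{1-\epsilon^2\pm 2i\epsilon}{1+\epsilon^2},$$
which is independent of $h$ (indeed $|\xi|=1$). Proposition~\ref{p:dinamica-moeb}(a) then yields conjugacy to a rotation with rotation number $\arg(\xi)/(2\pi)$.

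\emph{The sign.} The main obstacle is controlling this sign, because the conjugacy class of a rotation depends on orientation and on the choice of branch of $m$. To fix it I will not rely only on the algebra. I would relate the sign of $\arg\xi$ to the direction of motion on the circle induced by $t\mapsto P_h(t)$. As a cross-check, the linear part of $S_1$ at $O_1$ is $(\dot x,\dot y)=(-y,x)$, a counterclockwise rotation. At $O_2$, writing $y=-1+w$ gives $(\dot x,\dot w)\approx(w,-x)$, a clockwise rotation. The KHK map of a linear center $(-y,x)$ is the Cayley map, namely rotation by the angle whose cosine is $(1-\epsilon^2)/(1+\epsilon^2)$ and whose sine is $2\epsilon/(1+\epsilon^2)$. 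This identifies $\rho_+$ on $h\geq 0$ and $\rho_-$ on $h\leq -1$. Since $\rho$ is constant in $h$ on each basin, continuity as $h\to 0$ or $h\to-1$ propagates the sign through the whole basin. The degenerate levels $h=0,-1$ are the fixed centers, and the statement there is read as the limit.

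\emph{The separatrix.} Finally, I treat $C_\infty$ directly. Setting $y=-1/2$, the denominator of \eqref{e:PhiS1} becomes $(2\epsilon x-1)^2$. The numerator of the first component, $-2\epsilon x^2+(1-\epsilon^2)x+\epsilon/2$, vanishes at $x=1/(2\epsilon)$, so after cancellation $\Phi_{1,\epsilon}|_{C_\infty}$ is a M\"obius map in $x$ on $\widehat{\R}$. Computing its $\Delta$ and $\xi$ and applying Proposition~\ref{p:dinamica-moeb}(a) again gives $\rho_+$. The orientation of the line is inherited from the circles of the $O_1$ basin, which accumulate on it as $h\to\infty$.
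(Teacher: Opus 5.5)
Your proposal is correct and is essentially the paper's proof: you parametrize the circles by lines, conjugate $\Phi_{1,\epsilon}|_{C_h}$ to a M\"obius map whose $\xi$ does not depend on $h$, and treat $C_\infty$ directly through the restriction $x\mapsto(-2x-\epsilon)/(4\epsilon x-2)$. The differences are minor: you use the single base point $(0,h-m)$, $m^2=h^2+h$, for both basins instead of the paper's $(\sqrt{h},0)$ and $(\sqrt{-h-1},-1)$, and you fix the sign of $\arg\xi$ from the Cayley linear part at $O_1$, $O_2$ plus continuity in $h$. The paper leaves this orientation step implicit, but it is genuinely needed: replacing the coefficient matrix by its negative turns $\xi$ into $1/\xi$, so the formula of Proposition~\ref{p:dinamica-moeb} alone cannot distinguish $\rho_+$ from $\rho_-=1-\rho_+$.
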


\begin{nota}\label{r:notanoh}
This result shows that the rotation number depends not on the energy level, but on the specific central basin in which it is computed. We find this particularly remarkable, and even beautiful, as it reveals that the KHK-map inherits a property analogous to isochrony: as all periodic orbits of the vector field share the same period, for any fixed $\epsilon$, all solutions of $\Phi_{1,\epsilon}$ in the same central basin have the same rotation number.  
\end{nota}

As a consequence of the above result, for those $\epsilon \neq 0$ where the rotation number is rational, all orbits of $\Phi_{1,\epsilon}$ across all energy levels are periodic with the same period. This means that the family \eqref{e:PhiS1} provides a source of birational globally periodic maps of all periodic maps with all periods, except $p=2$, and that this happens, in particular, for arbitrarily small values of $\epsilon$, which matters when viewing these KHK-maps as numerical methods:

\begin{propo}\label{p:periodesS1}
For all $p\in\mathbb{N}\setminus\{2\}$ there exists $\epsilon\neq 0$ such that $\Phi_{1,\epsilon}$ in \eqref{e:PhiS1} is globally $p$-periodic. Furthermore, there exists an open and dense set of values of $ \epsilon\in\mathbb{R} $ for which $ \Phi_{1,\epsilon} $ is globally periodic.
\end{propo}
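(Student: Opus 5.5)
The plan is to make the rotation number of Proposition~\ref{p:dinamicaS1} explicit as a function of $\epsilon$ and then invert it. Set $\theta=\arctan\epsilon\in(-\pi/2,\pi/2)$. The key observation is
$$
\frac{1-\epsilon^2}{1+\epsilon^2}+ i\,\frac{2\epsilon}{1+\epsilon^2}=\frac{(1+i\epsilon)^2}{|1+i\epsilon|^2}=e^{2i\theta}.
$$
So the number $\xi$ of Proposition~\ref{p:dinamica-moeb} attached to the M\"obius maps conjugate to $\Phi_{1,\epsilon}|C_h$ is $e^{2i\theta}$ in the basin of $O_1$ and on $C_\infty$. In the basin of $O_2$ it is the complex conjugate $e^{-2i\theta}$. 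Hence $\rho_+(\epsilon)=\theta/\pi \pmod 1$ and $\rho_-(\epsilon)=1-\rho_+(\epsilon)$, in agreement with Lemma~\ref{l:rotmenysepsilon}.

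Fix $p\geq 3$ and choose $\epsilon=\tan(\pi k/p)$ with $\gcd(k,p)=1$ and $0<k<p/2$; for example $k=1$ gives $\epsilon=\tan(\pi/p)\neq 0$. Then $e^{\pm 2i\theta}=e^{\pm 2\pi i k/p}$ are both primitive $p$-th roots of unity. By Proposition~\ref{p:dinamica-moeb}(a), the restriction of $\Phi_{1,\epsilon}$ to every circle $C_h$ with $h\in\mathcal{H}\setminus\{0,-1\}$, and to $C_\infty$, is periodic with minimal period exactly $p$. The degenerate levels $h=0$ and $h=-1$ are the fixed points $O_1$ and $O_2$. These curves foliate the whole domain of $\Phi_{1,\epsilon}$: the two central basins together with the separatrix line $y=-1/2$ cover $\mathbb{R}^2$. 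Therefore $\Phi_{1,\epsilon}^{p}=\mathrm{id}$ at every point where the iterates are defined, with minimal period $p$ off $\{O_1,O_2\}$, and this is global $p$-periodicity. Since $\Phi^p_{1,\epsilon}-\mathrm{id}$ is rational and vanishes on an open set, the identity also holds as an identity of rational maps.

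Next I show that period $2$ cannot occur. It would require $e^{2i\theta}=-1$, that is $\theta=\pm\pi/2$, which is impossible for finite $\epsilon$ (it corresponds to $\epsilon=\infty$). For $p=1$ the only choice is $\theta=0$, i.e.\ $\epsilon=0$. This case falls outside $\epsilon\neq 0$, so I would remark on it rather than include it; the substantive content is $p\ge 3$.

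For the second assertion, the set of $\epsilon$ giving global periodicity is $\{\tan(\pi r): r\in\mathbb{Q}\cap(-1/2,1/2)\}$. It is dense in $\mathbb{R}$ because $\tan$ is a homeomorphism from $(-\pi/2,\pi/2)$ onto $\mathbb{R}$ and the rationals are dense. I expect the main obstacle to be the word \emph{open} in the statement. By Proposition~\ref{p:dinamica-moeb}, when $\rho_+(\epsilon)$ is irrational every orbit is dense on its circle, so the globally periodic parameters form a countable dense set, not an open one. In writing the proof I would establish density and point out that openness should be read as density, since this is the strongest statement the argument supports. The remaining verification is routine: checking that the conjugating parametrizations are valid on every $C_h$, including the projective closure of $C_\infty$ through the point at infinity. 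For this I would rely directly on Proposition~\ref{p:dinamicaS1}.
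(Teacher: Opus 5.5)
Your proof is correct and follows the same route as the paper: take $\xi_\pm$ from Proposition~\ref{p:dinamicaS1} and invert the rotation number as a function of $\epsilon$. The difference is in execution. The paper gets $\mathrm{Image}(\rho_+)=[0,1)\setminus\{1/2\}$ from a quadrant analysis, monotonicity and Lemma~\ref{l:rotmenysepsilon}. Your identity $\xi_+=(1+i\epsilon)^2/(1+\epsilon^2)=e^{2i\arctan\epsilon}$ gives $\rho_+(\epsilon)=\arctan(\epsilon)/\pi$ modulo $1$ directly. This yields the explicit parameters $\epsilon=\tan(\pi k/p)$ and the exact set of globally periodic parameters. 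You also state explicitly that the two basins and $C_\infty$ exhaust $\mathbb{R}^2$, which is what turns periodicity on each level curve into global periodicity.

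Both of your caveats are right, and they point to genuine errors in the paper's proof, not weaknesses in yours.
\begin{itemize}
\item The paper passes from the bijectivity of $\rho_+$ on $(-\infty,0)$ and $(0,\infty)$ to an \emph{open} dense set of $\epsilon$ with $\rho_+(\epsilon)\in\mathbb{Q}$. But the preimage of $\mathbb{Q}$ under such a homeomorphism is countable, so it has empty interior. Since an irrational $\rho_+$ makes every orbit dense in its circle, only density holds, which is what the abstract and the conclusions actually claim.
\item For $p=1$, the irreducible fraction $q/p$ with $q\neq 0$ that the paper invokes does not exist. So global period $1$ forces $\epsilon=0$, and the first assertion really concerns $p\geq 3$.
\end{itemize}

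Your closed form also catches a slip in the example after the statement. The value $\pm 1.376381920=\pm\tan(3\pi/10)$ gives $\rho_+\in\{3/10,7/10\}$, i.e.\ global period $10$, not $5$. The missing globally $5$-periodic values are $\pm\tan(2\pi/5)\approx\pm 3.0777$.
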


We want to note that the existence of a dense subset of values of the parameter $\epsilon$ in $\mathbb{R}$ such that $\Phi_{1,\epsilon}$ is globally periodic implies that for these values there exist first integrals of $\Phi_{1,\epsilon}$ that are functionally independent of $H_1$, \cite{CGM06}. A particular simple case of global periodicity occurs for $\epsilon=\pm 1$. In this case,  the maps 
$$\Phi_{\pm 1,\epsilon}(x,y)=\left(\frac{\mp\left(x^{2}+y^{2}+y\right)}{2 x^{2}+2 y^{2}\mp 2 x +2 y +1}, \frac{-x^{2}-y^{2}\pm x}{2 x^{2}+2 y^{2}\mp 2 x +2 y +1}\right)
$$
are globally $4$-periodic. As in all globally periodic cases, these maps possess an additional first integral that is functionally independent of $H_1$. For instance, by applying the method described in \cite{CGM06}, we were able to explicitly compute this additional integral:
$$
V(x,y)=-\frac{\left(x^{2}+y^{2}+x \right) x^{2} y \left(x^{2}+y^{2}+y \right)^{2} \left(x^{2}+y^{2}-x \right) \left(2 x^{2}+2 y^{2}+y \right)}{\left(4 x^{2}+4 y^{2}+4 y +1\right)^{2} \left(2 x^{2}+2 y^{2}+2 x +2 y +1\right)^{2} \left(2 x^{2}+2 y^{2}-2 x +2 y +1\right)^{2}}.
$$
Indeed, a straightforward computation shows that $V(\Phi_{\pm 1,\epsilon})=V$, and that $\det(\nabla H_1,\nabla V)\neq 0$ almost everywhere, which means that $H_1$ and $V$ are functionally independent \cite[Page 28 and Definition 2.3]{Goriely}. We have also identified another rational first integral, functionally independent of $H_1$ (though, of course, functionally dependent on $V$), which has degree $7$, but with a more complicate expression.

It is also easy to identify the values of $\epsilon$ for which certain specific periods occur.  
For example, by imposing that $\rho_+(\epsilon) = \frac{1}{5}$, we obtain:  
$$  
\epsilon = \pm \frac{\sqrt{\tan^2\left(\frac{2\pi}{5}\right) + 1} - 1}{\tan\left(\frac{2\pi}{5}\right)}. 
$$  
That is, for $\epsilon \in \{\pm 0.7265425284, \pm 1.376381920\}$, the corresponding maps $\Phi_{1,\epsilon}$ are globally $5$-periodic.

\subsubsection{Proof of Propositions \ref{p:dinamicaS1} and \ref{p:periodesS1}}\label{ss:proofphi1}
 
\begin{proof}[Proof of Proposition \ref{p:dinamicaS1}]
We apply the methodology in Section \ref{s:method}: 

\emph{Step 1.} To parametrize the curves $C_h$ for $h \geq 0$ that encircle the point $O_1 = (0,0)$, we use the line parametrization method
(explained in Section \ref{ss:proof-Petrera-Suris-example}), with the base point $(x_0, y_0) = (\sqrt{h}, 0)$, obtaining
$$P_{O_1}(t)=\left(
\frac{\sqrt{h}\, \left(t^{2}+2 \sqrt{h}\, t -1\right)}{t^{2}+1}, \frac{2  \sqrt{h}\, t\,\left(\sqrt{h}\, t -1\right)}{t^{2}+1}\right),
$$ and
$$
P^{-1}_{O_1}(x,y)=\frac{x -y+h +\sqrt{h}}{-x - \sqrt{h}\,y+\sqrt{h}\, \left(2 h +1\right)}.
$$

To parametrize the curves $C_h$ for $h \leq 1$ that encircle the point $O_2 = (0,-1)$, we use the base point $(x_0, y_0) = (\sqrt{-h -1}, -1)$, obtaining
$$P_{O_2}(t)=\left(\frac{\sqrt{-h -1}\, t^{2}+\left(2 h +2\right) t -\sqrt{-h -1}}{t^{2}+1},\frac{\left(2 h +1\right) t^{2}-2 \sqrt{-h -1}\, t -1}{t^{2}+1}\right),
$$ and
$$
P^{-1}_{O_2}(x,y)=\frac{ x +y-h +\sqrt{-h -1}}{x-\sqrt{-h -1}\,y +2 \sqrt{-h -1}\, h }.
$$
\emph{Step 2.} We compute the associated M\"obius transformations:
$$M_{O_1}(t) = P_{O_1}^{-1} \circ \left(\Phi_{1,\epsilon}{|_{C_h}} \right)\circ P_{O_1}(t)=\frac{at+b}{t+d}$$ with
$$
a=\frac{2 \epsilon  \sqrt{h}-1}{\epsilon  \left(4 h +1\right)}, \,
b=-\frac{1}{4 h +1}\,\mbox{ and }\,
c=\frac{-2 \epsilon  \sqrt{h}-1}{\epsilon  \left(4 h +1\right)}.
$$
Using the notation in Proposition \ref{p:dinamica-moeb} we have:
$$
\Delta=-\frac{4}{\left(4 h +1\right)^{2}}\,\mbox { and }\,
\xi= \frac{1 - \epsilon^2}{1 + \epsilon^2} + i \, \frac{2 \epsilon}{1 + \epsilon^2}.
$$
Observe that $\Delta<0$ (and $|\xi|=1$), hence $M_{O_1}(t)$ is conjugated to rotation with the rotation number $\rho_{+}(\epsilon)$ given in \eqref{e:rotnumbers1}. We stress that, for a given $\epsilon$, the rotation number does not depend on the energy level $h$. 
A similar computation allow us to check that 
$M_{O_2}(t) = P_{O_2}^{-1} \circ \left(\Phi_{1,\epsilon}{|_{C_h}} \right)\circ P_{O_2}(t)$ conjugated to rotation with the rotation number $\rho_{-}(\epsilon)$.

On the curve $C_{\infty}=\{y=-1/2\}$ a trivial computation gives
$$
\Phi_{1,\epsilon}\left(x,-\frac{1}{2}\right)=\left(\frac{-2 x-\epsilon }{4 \epsilon  x -2},-\frac{1}{2}\right).
$$ The result follows by studying the M\"obius transformation $M(x)=(-2 x-\epsilon )/(4 \epsilon  x -2)$.~
\end{proof} 
 
\begin{proof}[Proof of Proposition \ref{p:periodesS1}]
Consider the rotation number function $\rho_+ = \arg(\xi_+)/ (2\pi)$ associated with the level curves $C_h$ for $h \in \mathbb{R}^+ \cup \{\infty\}$ surrounding the point $O_1 = (0, 0)$. We have:
$$
\xi_+ = \frac{1 - \epsilon^2}{1 + \epsilon^2} + i \, \frac{2 \epsilon}{1 + \epsilon^2}.
$$
Observe that $|\xi_+| = 1$. For $0 \leq \epsilon \leq 1$, $\xi_+$ lies in the first quadrant, while for $\epsilon > 1$, it lies in the second quadrant. Its argument grows monotonically from $0$ to $\pi$, so the rotation number increases monotonically, and its image for $\epsilon \geq 0$ is $\mathrm{Image}(\rho_+(\epsilon)) = [0, 1/2)$. From Lemma \ref{l:rotmenysepsilon}, for $\epsilon < 0$, $\mathrm{Image}(\rho_+(\epsilon)) = (1/2, 1)$. Hence:
$$
\mathrm{Image}_{\epsilon \in \mathbb{R}} (\rho_+(\epsilon)) = [0, 1) \setminus \left\{ \frac{1}{2} \right\}.
$$ Since $\rho_+(\epsilon)$ is a continuous function on $\mathbb{R}$, for all $p \in \mathbb{N} \setminus \{2\}$, there exists an irreducible fraction $q/p \in \mathrm{Image}_{\epsilon \in \mathbb{R}}(\rho_+(\epsilon))$ with $q \neq 0$. Therefore, there exists $\epsilon \neq 0$ such that $\rho_+(\epsilon) = q/p$. In fact, $\rho_+(\epsilon)$ is a monotonous analytic function (decreasing for $\epsilon < 0$ and increasing for $\epsilon > 0$). Hence, it is bijective on each of the intervals $(-\infty, 0)$ and $(0, \infty)$, respectively. Therefore, there is an open and dense set of values $\epsilon \neq 0$ such that $\rho_+(\epsilon)$ is rational, and, consequently, $\Phi_{1,\epsilon}$ is globally periodic.

Observe that $\rho_{-}(\epsilon) = 1 - \rho_{+}(\epsilon)$; hence, the same argument applies to $\rho_{-}(\epsilon)$. The globally periodic behaviors predicted by $\rho_{+}(\epsilon)$ coincide with those predicted by $\rho_{-}(\epsilon)$.~
\end{proof}

\subsection{Evidences of non integrability of the KHK maps associated with the quadratic fields $S_2$, $S_3$ and $S_4$}\label{ss:S2}

When we started this work, we believed that, due to the special geometric and integrability properties of systems with isochronous centers, they would provide a source of interesting examples of integrable KHK maps. However, despite the results obtained in the case of $S_1$, in the remaining quadratic isochronous cases we have numerical evidence suggesting the non-integrability of their associated KHK maps.

We first consider the system $S_2$. The fibration associated with the level curves of the first integral $H_2$ has genus $0$. Therefore, we believed that $S_2$ was a good candidate to have an associated integrable KHK map. The KHK map associated with $S_2$ is
$$
\Phi_{2,\epsilon}(x,y)=\left(\frac{-\epsilon  \,x^{2}+\epsilon^{2} x y +\left(1-\epsilon^{2}\right) x -2 \epsilon  y}{2 \epsilon^{2} x^{2}-3 \epsilon  x+\epsilon^{2} y +\epsilon^{2} +1},\frac{-2 \epsilon^{2} x^{2}-\epsilon  x y -\epsilon^{2} y^{2}+2 \epsilon  x +\left(1-\epsilon^{2}\right) y}{2 \epsilon^{2} x^{2}-3 \epsilon  x+\epsilon^{2} y +\epsilon^{2} +1}\right).
$$

Unlike what happens in the case of $S_1$, in the case of system $S_2$, the function $H_2$ is not a first integral of the associated KHK maps $\Phi_{2,\epsilon}$, and the vector field $X_2$ associated with the system $S_2$ is not a Lie symmetry of the map. Similarly, $\Psi_{2,\delta}$, the KHK map of the commutator field $Y_2$, does not commute with $\Phi_{2,\epsilon}$ (interestingly, $Y_2$ is a Lie symmetry of $\Psi_{2,\delta}$).

Following the approach of our investigation, we tried to find a rational first integral for $\Phi_{2,\epsilon}$. However, we did not find any integrals of this type. In fact, numerical simulations of the KHK map for different values of $\epsilon$ show evidence of the typical non-integrable behavior found in maps close to perturbed twist maps, see, for instance, \cite[Chapter 6]{AP}. The results  of some of these simulations are  given in Figures \ref{f:f1} and \ref{f:f2}.
 
 \begin{figure}[H]
\centerline{
\includegraphics[scale=0.4]{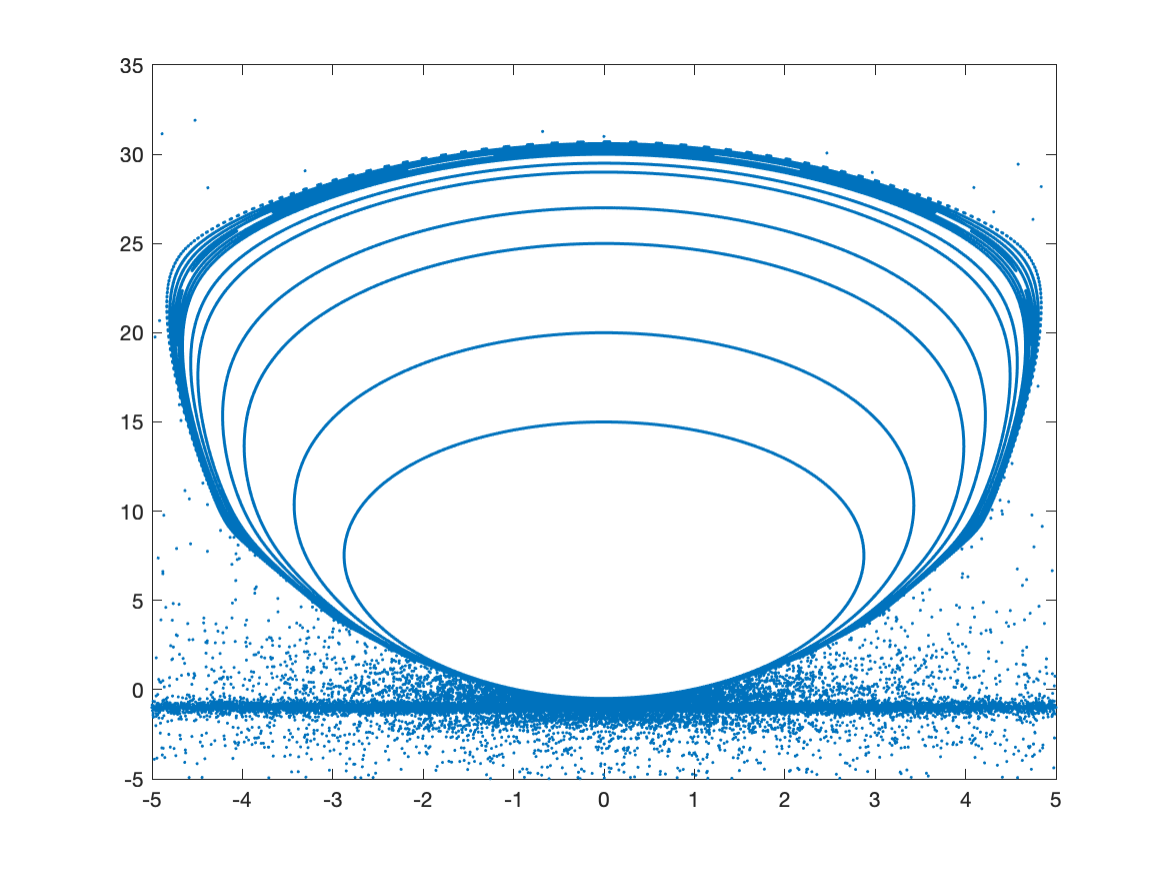}
\includegraphics[scale=0.4]{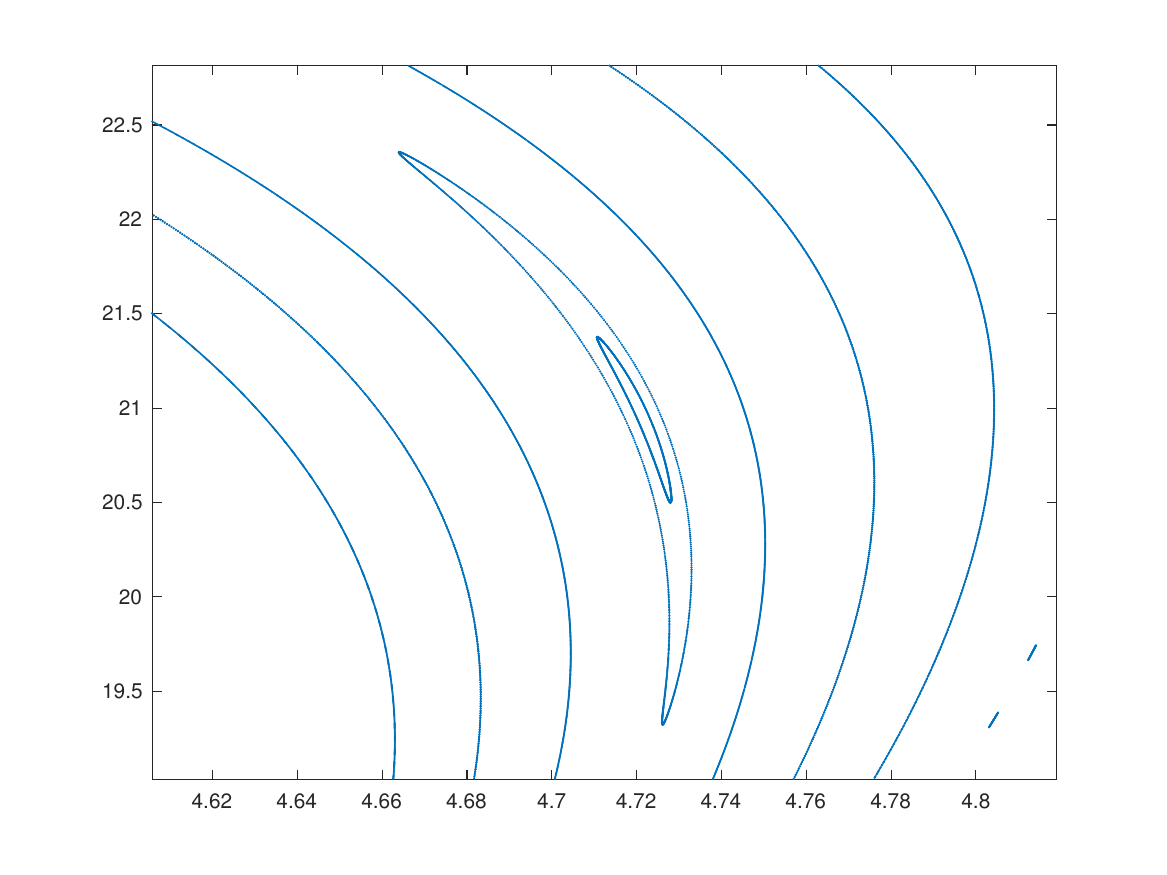}}\caption{
Some orbits of the KHK map $\Phi_{2,\epsilon}$ associated with the isochronous vector field $S_2$ for $\epsilon=0.1$ showing some of the typical characteristics of a
non-integrable perturbed twist maps (left). Detail of one island surrounding an elliptic orbit (right).}\label{f:f1}
\end{figure}
 
 \begin{figure}[H]
\centerline{
\includegraphics[scale=0.45]{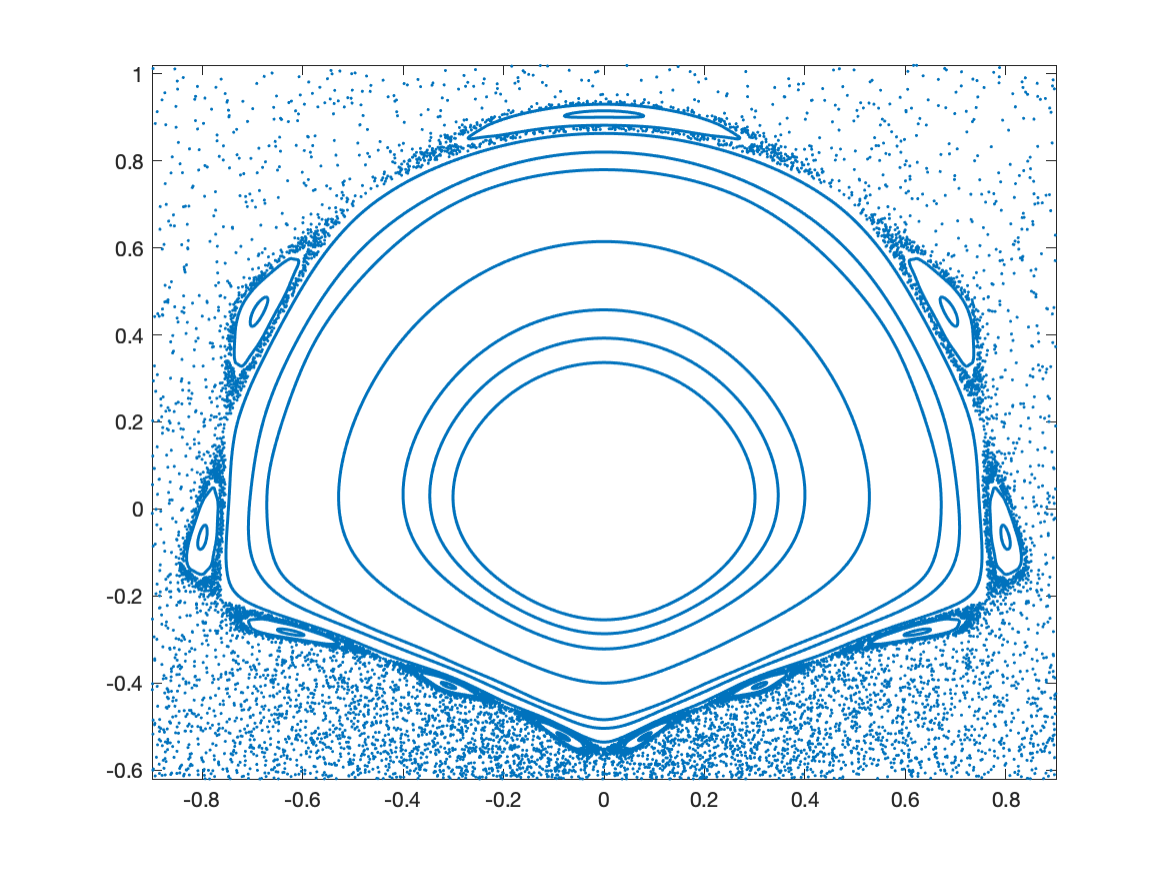}}\caption{
Some orbits of the KHK map $\Phi_{2,\epsilon}$ associated with the isochronous vector field $S_2$ for $\epsilon=1$.}\label{f:f2}
\end{figure}

\medskip

 The KHK map associated with the system $S_3$ is
\begin{multline*}
\Phi_{3,\epsilon}(x,y)=\left(
\frac{48 \epsilon  \,x^{2}-48 \epsilon^{2} x y +9(1- \epsilon^{2}) x  -18 \epsilon  y}{128 \epsilon^{2} x^{2}+72 \epsilon  x -48 \epsilon^{2} y +9 (\epsilon^{2}+1)},\right.\\
\left.\frac{24 \epsilon^{2} x^{2}-24 \epsilon  x y +48 \epsilon^{2} y^{2}+18 \epsilon  x +9(1- \epsilon^{2}) y}{128 \epsilon^{2} x^{2}+72 \epsilon  x -48 \epsilon^{2} y +9 (\epsilon^{2}+1)
}
\right).
\end{multline*}
A computation shows that $\Phi_{3,\epsilon}$ does not preserve the energy level curves of $H_3$ and that the vector field associted with $S_3$ is not a Lie symmetry of it. Again we have failed to find rational first integrals and the numerical experiments suggest the typical non-integrable behavior of perturbed twist maps, see Figures \ref{f:f3} and \ref{f:f4}.

 \begin{figure}[H]
\centerline{
\includegraphics[scale=0.4]{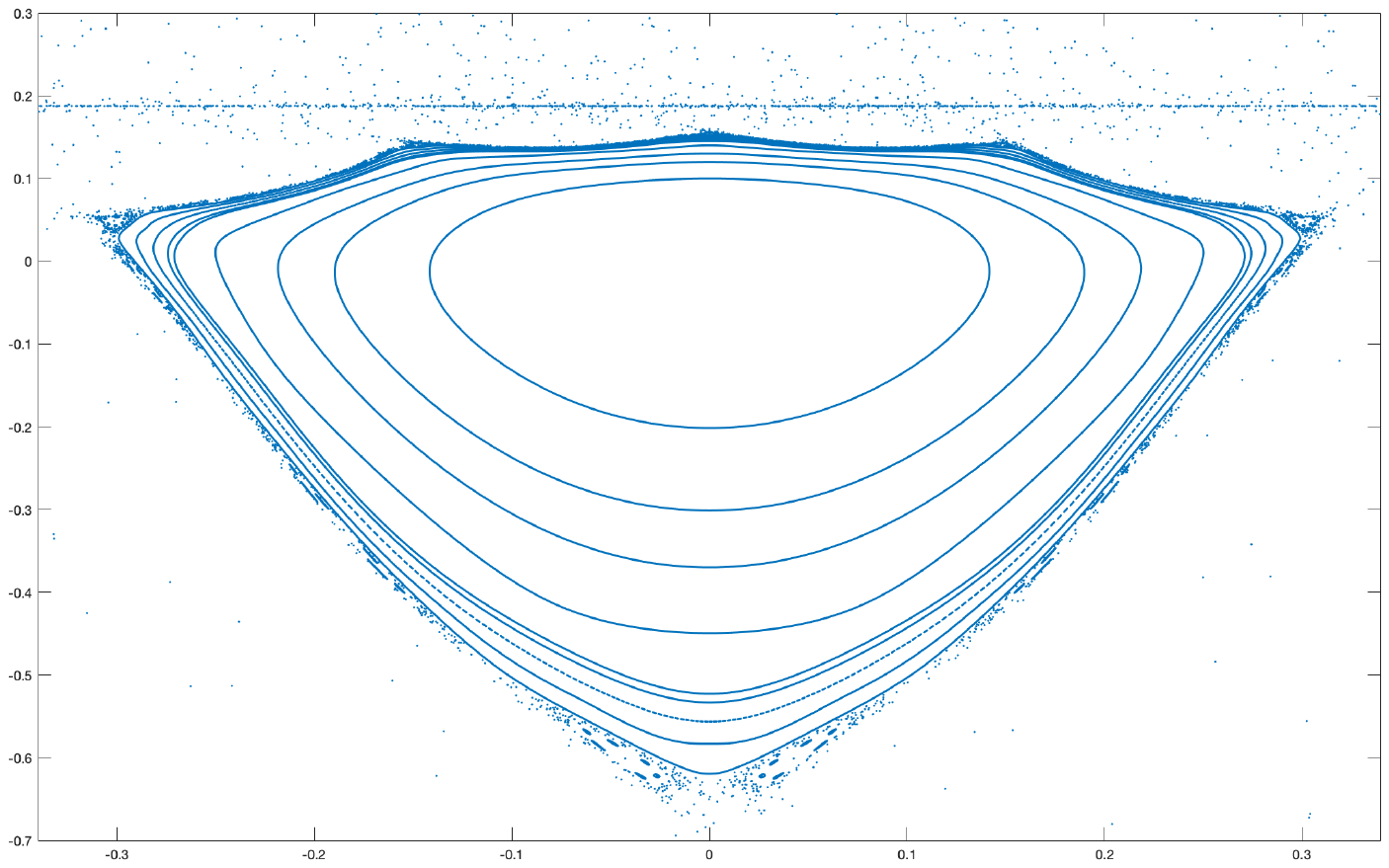}}\caption{
Some orbits of the KHK map $\Phi_{3,\epsilon}$ associated with the isochronous vector field $S_3$ for $\epsilon=0.5$.}\label{f:f3}
\end{figure}

 \begin{figure}[H]
\centerline{
\includegraphics[scale=0.4]{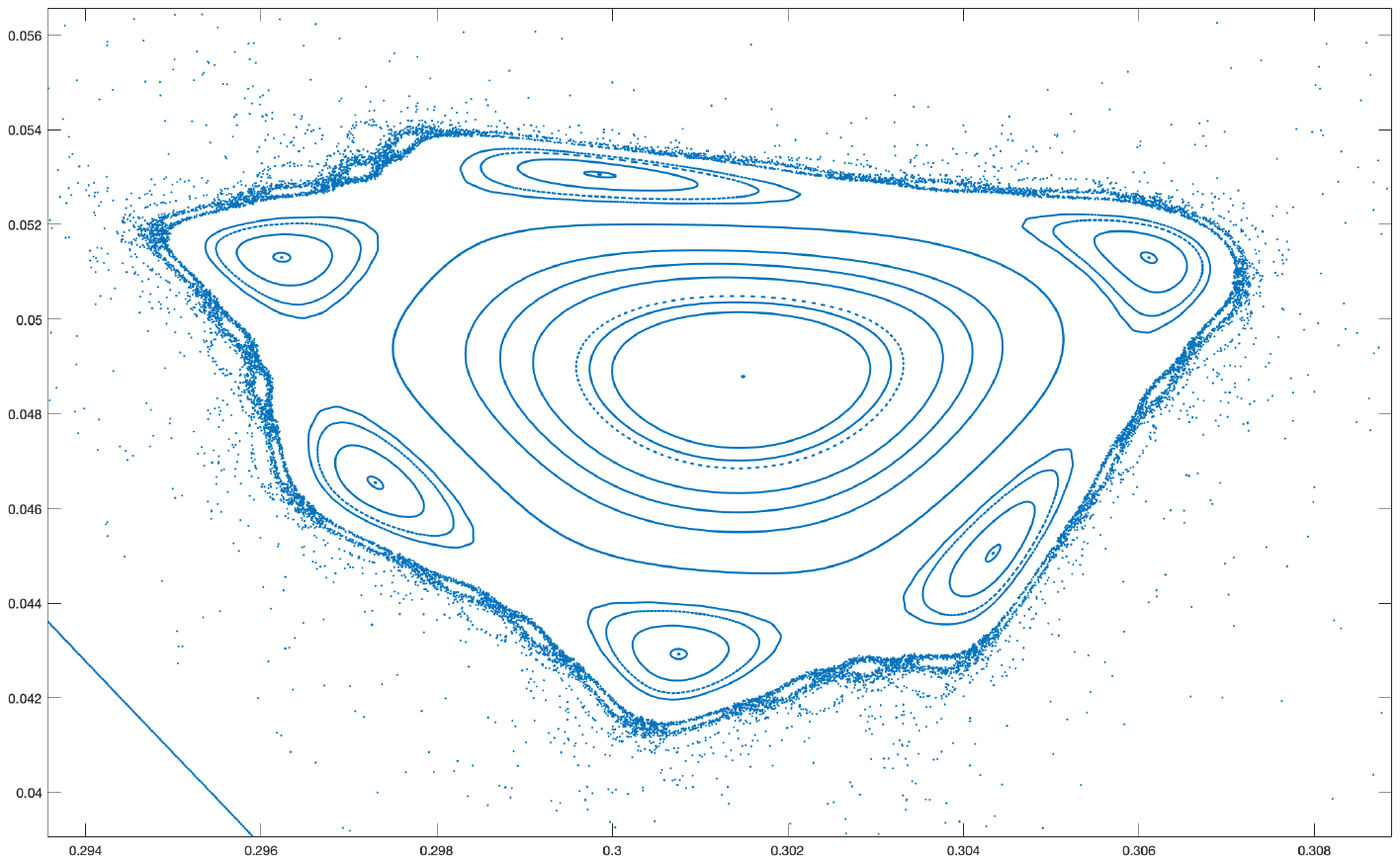}}\caption{
Detail  of the orbits of the KHK map $\Phi_{3,\epsilon}$ associated with the isochronous vector field $S_3$ for $\epsilon=0.5$.}\label{f:f4}
\end{figure}

Our numerical studies of the KHK map $\Phi_{4,\epsilon}$ associated with the vector field $S_4$ also provide evidence of non-integrability.
 
%When we studied the KHK map associated with system $S_4$, we obtained the same numerical evidence of non-integrability.

\section{Pseudo-KHK maps associated with Isochronous vector fields} \label{s:pseudoKHK}
 
\subsection{Definition and main properties}
 
One of the most remarkable properties of planar analytic vector fields with isochronous centers is that they admit both first integrals and \emph{local linearizations}: that is, locally invertible maps that conjugate the flow of the vector field with the one of a linear vector field. Indeed, let $X$ be an analytic planar vector field with an isochronous center. Then, there exists an analytic conjugation defined in a neighborhood of the point $(u,v)=L(x,y)$ with the vector field
\begin{equation}\label{e:XL}
X_L= -\omega v \frac{\partial}{\partial u}+\omega u\frac{\partial}{\partial v} \mbox{ and } \omega>0.
\end{equation}
that is,
\begin{equation}\label{e:conjuXL}
X_L(u,v)=\left(DL\cdot X\right)|_{L^{-1}(u,v)}.
\end{equation}
See \cite[Theorems 3.2 and 3.3]{MRT}, and also \cite{ChaSab}, for example.

Before introducing a class of integrable maps, that we call \emph{pseudo-KHK maps}, we would like to point out that the KHK map $\widetilde{\Phi}_{L,\epsilon}$ associated with \eqref{e:XL}, is given by:
\begin{equation}\label{e:phiL}
\Phi_{L,\epsilon}(u,v)=\left(\frac{\left(1-\epsilon^{2} \omega^{2}\right) u -2 \epsilon  \omega  v}{{\epsilon^{2} \omega^{2}+1}}, \frac{2 \epsilon  \omega  u +\left(1-\epsilon^{2} \omega^{2}\right) v}{{\epsilon^{2} \omega^{2}+1}}\right).
\end{equation}
A straightforward calculation shows that
\begin{equation}\label{e:propietatsdephiL}
H_L\left(\Phi_{L,\epsilon}\right)=H_L
\,\mbox{ and }\,
 X_L|_{\Phi_{L,\epsilon}}=D\Phi_{L,\epsilon}\cdot X_L,
\end{equation}
where $H_L(u,v)=u^2+v^2$. That is, both the vector field and the map admit the same first integral~$H_L$, and $X_L$ is a Lie symmetry of $\Phi_{L,\epsilon}$.

The existence of linearizations of isochronous centers allows us to construct new integrable maps associated with vector fields having isochronous centers, by using the linearization $L$ as a conjugation with the KHK map associated with the linear center \eqref{e:XL}. We call these maps \emph{pseudo-KHK maps}.

\begin{defi}\label{d:pseudoKHK}
Let $X$ be a planar analytic vector field with an isochronous center, and let $L$ be the linearization that conjugates $X$ and the linear vector field $X_L$ in \eqref{e:XL}. We denote by $\Phi_{L,\epsilon}$ the KHK map associated with $X_L$, given in Equation \eqref{e:phiL}. Then, we call $\widetilde{\Phi}_\epsilon$ the \emph{pseudo-KHK map} associated with $X$, defined as the map
\begin{equation}\label{e:pKHK}
\widetilde{\Phi}_\epsilon(x,y) = L^{-1} \circ \Phi_{L,\epsilon} \circ L(x,y).
\end{equation}
\end{defi}

We emphasize that, in general, this construction  is not restricted to quadratic vector fields, but applies to any vector field that admits a linearization, see Section \ref{sspKHKnonquadratic}.

\begin{nota}
Observe that the map $\Phi_{L,\epsilon}$ is linear, and both $L$ and $L^{-1}$ are locally analytic. Hence, by construction,  the pseudo-KHK map $\widetilde{\Phi}_\epsilon$ is locally analytic. In general, the linearizations $L$ are not birational, see \cite{MMR} for a study of cases in which the linearization is a Darboux map. However, if the linearization is birational, then $\widetilde{\Phi}_\epsilon$ is a birational map.
This is the case of the families $S_i$, with $i=1,2$ and $3$, of quadratic vector fields where the linearizations $L$ are explicit birational maps. But, interestingly, this is not the case for the vector field $S_4$, see Section \ref{ss:PKHKQuadratic}.
\end{nota}

By construction, pseudo-KHK maps $\widetilde{\Phi}_\epsilon$ have a first integral. Indeed, a straightforward computation shows that it admits the integral $$\widetilde{H}(x,y)=H_L(L(x,y)).$$
But, most importantly, the pseudo-KHK maps preserve any first integral of the original vector field, and this vector field is a Lie symmetry of the map:

\begin{propo}\label{p:lemaphil}
Let $X$ be an analytic planar vector field having an isochronous center with a first integral $H$, and such that there exists a differentiable linearization $L$ that conjugates $X$ with the linear vector field $X_L$ in a neighborhood of the center. Let $\widetilde{\Phi}_\epsilon$ be its associated pseudo-KHK map. Then:
\begin{enumerate}[(a)]
\item The vector field $X$ is a Lie symmetry of $\widetilde{\Phi}_\epsilon$.
\item Both $\widetilde{\Phi}_\epsilon$ and $X$ share the first integral $H$.
\item On each closed curve $C_h=\{H=h\}$ of the invariant fibration associated with the first integral $H$ within the domain of definition of the linearization $L$, the restriction $\widetilde{\Phi}_{\epsilon}|_{C_h}$ is conjugate to a rotation. For any fixed $\epsilon$, the rotation number $\rho_\epsilon(h)$ is constant.
\item The map $\widetilde{\Phi}_\epsilon$ preserves an invariant measure absolutely continuous with respect to the Lebesgue one.
\end{enumerate}
\end{propo}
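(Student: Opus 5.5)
The plan is to transport everything through the conjugation $L$. By Definition \ref{d:pseudoKHK}, $\widetilde{\Phi}_\epsilon=L^{-1}\circ\Phi_{L,\epsilon}\circ L$. The identities \eqref{e:propietatsdephiL} already give the linear-model versions of all four statements. We pull them back to $(x,y)$ using \eqref{e:conjuXL}, which says $L_*X=X_L$.

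For (a), we verify the compatibility equation \eqref{e:Lie-Symm} using the chain rule,
$$
D\widetilde{\Phi}_\epsilon(\mathbf{x})=DL^{-1}\big(\Phi_{L,\epsilon}(L(\mathbf{x}))\big)\,D\Phi_{L,\epsilon}\,DL(\mathbf{x}).
$$
Applying this to $X(\mathbf{x})$, we first replace $DL(\mathbf{x})X(\mathbf{x})$ by $X_L(L(\mathbf{x}))$, using \eqref{e:conjuXL}. Next, the Lie symmetry property in \eqref{e:propietatsdephiL} turns $D\Phi_{L,\epsilon}X_L(L(\mathbf{x}))$ into $X_L(\Phi_{L,\epsilon}(L(\mathbf{x})))$. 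Finally, \eqref{e:conjuXL} again, written as $DL^{-1}\cdot X_L=X\circ L^{-1}$, yields $X(\widetilde{\Phi}_\epsilon(\mathbf{x}))$. A cleaner, equivalent formulation is that the flows satisfy $\varphi^X_s=L^{-1}\circ e^{s\omega J}\circ L$, and $\Phi_{L,\epsilon}$ is itself a rotation, so $\widetilde{\Phi}_\epsilon=\varphi^X_{\tau(\epsilon)}$ with $\tau(\epsilon)=\frac{1}{\omega}\arg\big((1-\epsilon^2\omega^2+2i\epsilon\omega)/(1+\epsilon^2\omega^2)\big)$. In other words, $\widetilde{\Phi}_\epsilon$ is the time-$\tau(\epsilon)$ flow of $X$, and a flow commutes with its own generator. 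I would present this flow formulation, since it makes (a), (b) and (c) nearly immediate.

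For (b), since $\widetilde{\Phi}_\epsilon$ equals the flow of $X$ at a fixed time, any first integral $H$ of $X$ satisfies $H\circ\widetilde{\Phi}_\epsilon=H$. For (c), on a closed curve $C_h$ the flow of $X$ is periodic with common period $T=2\pi/\omega$, by isochronicity. Parametrizing $C_h$ by flow time gives a conjugacy of $C_h$ with $\mathbb{R}/T\mathbb{Z}$, under which $\widetilde{\Phi}_\epsilon$ becomes the translation by $\tau(\epsilon)$. It is therefore a rotation with rotation number $\rho=\tau(\epsilon)\omega/(2\pi)\bmod 1$, which is independent of $h$. Equivalently, $L(C_h)$ is a circle $u^2+v^2=r^2$, on which $\Phi_{L,\epsilon}$ is a rigid rotation by the angle $\arg\xi$ with $\xi=\big(1-\epsilon^2\omega^2+2i\epsilon\omega\big)/(1+\epsilon^2\omega^2)$.

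For (d), the pullback of Lebesgue measure under $L$ is invariant, because $\Phi_{L,\epsilon}$ is a rotation with determinant $1$. This measure has density $|\det DL(x,y)|$, which is nonzero wherever $L$ is a local diffeomorphism. Alternatively, we can invoke Theorem 12(ii) of \cite{CGM08}: writing $X=\mu(-H_y,H_x)$ gives density $1/|\mu|$.

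The main obstacle I expect is a technical one, namely making precise the domain issues. $L$ is only a local (analytic) diffeomorphism on a neighbourhood of the center, so we must check that $\widetilde{\Phi}_\epsilon$ maps this domain into itself. This holds because $L$ conjugates flows and $\Phi_{L,\epsilon}$ preserves the circles $L(C_h)$, so $C_h$ lies in the domain together with its image. The sign and orientation conventions for $\rho$ should be checked against Lemma \ref{l:rotmenysepsilon}.
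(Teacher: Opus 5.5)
Your argument is correct. Your first computation for (a) is exactly the paper's chain-rule verification of the compatibility equation. Where you depart from the paper is in the organizing idea you propose to present. Since $\Phi_{L,\epsilon}$ is the rigid rotation by $\theta=2\arctan(\epsilon\omega)$ and the flow of $X_L$ is rotation by $\omega s$, you get $\Phi_{L,\epsilon}=\varphi^{X_L}_{\tau}$ with $\tau=\theta/\omega$. Hence $\widetilde{\Phi}_\epsilon=\varphi^{X}_{\tau(\epsilon)}$ on the period annulus where $L$ conjugates the flows, and (a), (b), (c) all follow from properties of a single time-$\tau$ map.

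The paper argues differently. It proves (b) through orbits: $\Phi_{L,\epsilon}$ preserves the circles $H_L=c$, which $L^{-1}$ carries to orbits of $X$, which lie in level sets of $H$. It obtains (c) simply by noting that $\widetilde{\Phi}_\epsilon$ is conjugate to the rotation $\Phi_{L,\epsilon}$. It gets (d) from (c), or from (a)--(b) together with \cite[Theorem 12(ii)]{CGM08}.

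Your route gives a sharper conclusion: the pseudo-KHK map is the flow of $X$ for a time that does not depend on the orbit. The introduction only invokes \cite{CGM08} for an orbit-dependent time. The $h$-independence of $\rho=\omega\tau/(2\pi) \bmod 1$ then becomes a direct reflection of isochronicity.

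For (d), your pulled-back density $|\det DL|$ is explicit and needs only $\det D\Phi_{L,\epsilon}=1$. It agrees with the \cite{CGM08} route: writing $X=\mu(-\widetilde{H}_y,\widetilde{H}_x)$ with $\widetilde{H}=H_L\circ L$ gives $\mu=\omega/(2\det DL)$, so $1/|\mu|$ is a constant multiple of your density.

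You also settle the domain issue that the paper's proof leaves implicit: $\Phi_{L,\epsilon}(L(\mathbf{x}))$ stays in the image of $L$, because $L$ maps each closed orbit onto a full circle.
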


\begin{proof}
(a) Set $\mathbf{x}=(x,y)$, by using the conjugation equation \eqref{e:conjuXL} and the second relation  in Equation \eqref{e:propietatsdephiL}, we get:
\begin{align*}
X|_{\widetilde{\Phi}_\epsilon(\mathbf{x})}&=X|_{L^{-1}\circ \Phi_{L,\epsilon}\circ L(\mathbf{x})} =\left(DL^{-1}\cdot X_L\right)|_{\Phi_{L,\epsilon}\circ L(\mathbf{x})}\\
&=\left(DL^{-1}\right)|_{\Phi_{L,\epsilon}\circ L(\mathbf{x})}\cdot \left(D\Phi_{L,\epsilon}\cdot X_L\right)|_{L(\mathbf{x})}\\
&=\left(DL^{-1}\right)|_{\Phi_{L,\epsilon}\circ L(\mathbf{x})}\cdot \left( D\Phi_{L,\epsilon}\right)|_{L(\mathbf{x})}\cdot \left( DL\cdot X\right)|_{\mathbf{x}}\\
&=(D\widetilde{\Phi}_\ve\cdot X)|_{\mathbf{x}},
\end{align*} hence the compatibility condition \eqref{e:Lie-Symm} is satisfied.

(b) The proof is based on the following two facts. First: the KHK map $\Phi_{L,\epsilon}$ sends points of the orbits of $X_L$ to points on the same orbit of $X_L$. This is because the orbits of $X_L$ are the level curves $H_L=c$, which are preserved by $\Phi_{L,\epsilon}$, see Equation \eqref{e:propietatsdephiL}. Second: $L$ is a conjugation between the flows of $X$ and $X_L$. This implies that the orbit of $X_L$ given by the curve $H_L=c$ corresponds, via $L^{-1}$, to a unique orbit of $X$. Therefore, the points $(x,y)$ and $L^{-1}\left(\Phi_L\left(L(x,y)\right)\right)$ lie on the same orbit of $X$. Since the orbits of $X$ lie on the level sets of $H$, it follows that
$H(\widetilde{\Phi}_\epsilon(x,y))=H(x,y)$.

(c) By definition, see Equation \eqref{e:pKHK}, the maps $\widetilde{\Phi}_\epsilon$ are conjugate to the map $\Phi_L$ in \eqref{e:phiL} for some $\omega>0$, which is a rotation. Consequently, they are themselves rotations, and the associated rotation number on each level curve $C_h$ is constant.

The proof of (d) is a direct corollary of statement (c), but also of statements (a) and (b) by using \cite[Theorem 12(ii)]{CGM08}, see comment the at the end of Section \ref{s:method}.
\end{proof}

\subsection{Pseudo-KHK maps of quadratic isochronous vector fields}\label{ss:PKHKQuadratic}
  
For the quadratic isochronous vector fields with associated differential system $S_i$ with $i=1,\ldots, 4$, the explicit  linearizations $(u,v)=L_i(x,y)$ are given in Table \ref{table:2}. All these linearizations conjugate the original vector field with the linear center given by \eqref{e:XL} with $\omega=1$.

It is worth noticing that the pseudo-KHK map $\widetilde{\Phi}_{1,\epsilon}$ 
associated with $S_1$ is the KHK map ${\Phi}_{1,\epsilon}$ given in \eqref{e:PhiS1}.  
The pseudo-KHK maps $\widetilde{\Phi}_{i,\epsilon}$ associated with the quadratic isochronous vector fields $S_i$ with $i=2,\ldots, 4$, are given by

\begin{align*}
\widetilde{\Phi}_{2,\epsilon}&=\left(\frac{(\epsilon^{2} -1)x +2 \epsilon  y}{2 \epsilon  x -2 \epsilon^{2} y -\epsilon^{2}-1}
, \frac{-2 \epsilon  x +(\epsilon^{2} -1)y}{2 \epsilon  x -2 \epsilon^{2} y -\epsilon^{2}-1}\right)\\
\widetilde{\Phi}_{3,\epsilon} &=\left(\frac{24 \epsilon  \,x^{2}+9(1-\epsilon^2)x -18 \epsilon  y}{64 \epsilon^{2} x^{2}+48 \epsilon  x -48 \epsilon^{2} y +9 \epsilon^{2}+9},\frac{3\,G_3(x,y;\epsilon)}{(64 \epsilon^{2} x^{2}+48 \epsilon  x -48 \epsilon^{2} y +9 \epsilon^{2}+9)^2}
\right)\\
 \widetilde{\Phi}_{4,\epsilon}&=\left(\frac{-24 \epsilon  \,y^{2}+9(1-\epsilon^2)x -18 \epsilon  y}{128 \epsilon^{2} y^{2}-96 \epsilon  x +96 \epsilon^{2} y +9 \epsilon^{2}+9}, \quad -\frac{3 \left(G_4(x,y;\epsilon)-1\right)}{8\, G_4(x,y;\epsilon)}
\right).
\end{align*}
where
\begin{align*}
G_3(x,y;\epsilon)=&256 \epsilon^{4} x^{4}+384 \epsilon^{3} x^{3}-384 \epsilon^{4} x^{2} y +72 \epsilon^{2}\left(\epsilon^{2}+3\right) x^{2} -288 \epsilon^{3} x y \\
&+144 \epsilon^{4} y^{2}+54 \epsilon  \left(\epsilon^{2}+1\right)x  
+27(1-\epsilon^4)y,
\end{align*}
and
$$G_4(x,y;\epsilon)=
\sqrt{\frac{128 \epsilon^{2} y^{2}-96 \epsilon  x +96 \epsilon^{2} y +9 \epsilon^{2}+9}{\left(3+8 y \right)^{2} \left(\epsilon^{2}+1\right)}}.
$$

Observe that $\widetilde{\Phi}_{4,\epsilon}$ is not a rational map since, in this case, the linearization is not birational.

As a consequence of Proposition \ref{p:lemaphil}, and the fact that all the pseudo-KHK maps $\widetilde{\Phi}_{i,\epsilon}$
are conjugate with $\Phi_{L,\epsilon}$ with $\omega=1$, which is a rotation with rotation number $\rho=\frac{1}{2\pi}\,\mathrm{arg}\left(
\frac{1-\epsilon^{2}+2 \,i\epsilon}{1+\epsilon^{2}}\right)$, we obtain the following result:

\begin{corol}\label{c:pseudoKHKquadratic}
\begin{enumerate}[(a)]
\item The pseudo-KHK map $\widetilde{\Phi}_{1,\epsilon}$ 
associated with $S_1$ is the KHK map $\Phi_{1,\epsilon}$ given in~\eqref{e:PhiS1}.
\item The pseudo-KHK maps $\widetilde{\Phi}_{i,\epsilon}$, with $i=1,\ldots,4$,  have the same integral $H_i$ of the corresponding vector fields $S_i$; Furthermore, the vector fields $S_i$ are Lie Symmetries of the maps. 
\item On each invariant of curve $C_h$ of the central basin, the pseudo-KHK maps $\widetilde{\Phi}_{i,\epsilon}|_{C_h}$, with $i=1,\ldots,4$, are conjugate to a rotation. For each fixed $\epsilon$, the associated rotation number function is constant $\rho_\epsilon=\frac{1}{2\pi}\,\mathrm{arg}\left(
\frac{1-\epsilon^{2}+2 \,i\epsilon}{1+\epsilon^{2}}\right)$.
\item All the pseudo-KHK maps $\widetilde{\Phi}_{i,\epsilon}$ preserve an invariant measure absolutely continuous with respect the Lebesgue one.
\item All the pseudo-KHK maps $\widetilde{\Phi}_{i,\epsilon}$, with $i=1,\ldots,4$, are conjugate.
\end{enumerate}
\end{corol}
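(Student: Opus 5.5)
The plan is to derive each item of Corollary~\ref{c:pseudoKHKquadratic} from Proposition~\ref{p:lemaphil}, using the explicit linearizations $L_i$ of Table~\ref{table:2}. All of them conjugate $S_i$ with $X_L$ for $\omega=1$.

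For (a), I compute $\widetilde{\Phi}_{1,\epsilon}=L_1^{-1}\circ\Phi_{L,\epsilon}\circ L_1$ directly. First I invert $L_1$. Since $L_1$ is a M\"obius-type (inversive) birational map, $L_1^{-1}$ is again rational and can be found by solving the linear-fractional relations. Then I compose with the linear map \eqref{e:phiL} for $\omega=1$ and check, with a computer algebra system, that the result coincides with \eqref{e:PhiS1}. Equivalently, and with less algebra, it suffices to check $L_1\circ\Phi_{1,\epsilon}=\Phi_{L,\epsilon}\circ L_1$, which only requires composing rational functions and clearing denominators. This verification is the main computational obstacle. It is also the only step not covered by the general theory, since it is what makes $S_1$ special among the four systems.

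For (b), (c) and (d), I apply Proposition~\ref{p:lemaphil} to each $S_i$, with $L=L_i$ and $H=H_i$. Items (a) and (b) of that proposition give the shared first integral $H_i$ and show that $S_i$ is a Lie symmetry. Item (d) gives the absolutely continuous invariant measure. For the rotation number in (c), observe that \eqref{e:phiL} with $\omega=1$ is multiplication of $u+iv$ by
\[
\xi=\frac{1-\epsilon^2+2i\epsilon}{1+\epsilon^2},
\]
which has modulus one. Hence on each circle $H_L=c$ the map is a rigid rotation with rotation number $\frac{1}{2\pi}\arg(\xi)$. Conjugating by $L_i$ restricted to $C_h$, which maps $C_h$ homeomorphically onto a circle $H_L=c$ inside the central basin, gives the constant rotation number. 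For $S_4$, where $L_4$ is not birational, I only use that $L_4$ is an analytic local diffeomorphism on the central basin, which is all Proposition~\ref{p:lemaphil} requires. I must check that its domain contains the relevant curves, for instance by noting that $L_4$ is defined wherever $y\ne -3/8$.

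For (e), the plan is to set $\widetilde{\Phi}_{j,\epsilon}=(L_j^{-1}\circ L_i)\circ\widetilde{\Phi}_{i,\epsilon}\circ(L_j^{-1}\circ L_i)^{-1}$. This holds because both maps are conjugate to the same $\Phi_{L,\epsilon}$ with $\omega=1$. The composite $L_j^{-1}\circ L_i$ is the conjugacy, defined on the central basins, that is, wherever the images of the $L$'s overlap near the origin. I would state this conjugation as local, or valid on the common domain of the linearizations.
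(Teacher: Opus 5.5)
Your proposal is correct and follows the paper's route. Item (a) is a direct verification, which the paper simply asserts. Items (b)--(d) come from Proposition~\ref{p:lemaphil} with $L=L_i$, plus the fact that $\Phi_{L,\epsilon}$ for $\omega=1$ is the rotation by $\arg\bigl((1-\epsilon^2+2i\epsilon)/(1+\epsilon^2)\bigr)$. Item (e) follows from the common conjugacy to $\Phi_{L,\epsilon}$. Your attention to the domains of the $L_i$, notably the non-birational $L_4$, is extra care the paper does not spell out.

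One point to watch when you check $\omega=1$. As printed in Table~\ref{table:2}, $L_2$ gives $\dot u=v$, $\dot v=-u$, i.e.\ $\omega=-1$ and an orientation-reversing map. You need $v=y/(1+y)$ instead; that choice is the one that actually produces the paper's $\widetilde{\Phi}_{2,\epsilon}$ and the stated rotation number.
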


Statement (e) is a consequence of the fact that all the above KHK-maps are conjugate with $\Phi_L$ in \eqref{e:phiL}, with $\omega=1$. 

\subsection{Pseudo-KHK maps of quadratic isochronous from the perspective of the methodology in Section \ref{s:method}}

Corollary \ref{c:pseudoKHKquadratic}(c) describes the dynamics of $\widetilde{\Phi}_{i,\epsilon}$ for $i=1,\ldots,4$.
The fact that the KHK map $\Phi_{1,\epsilon}$ is actually a pseudo-KHK map implies that the analysis carried out in Section \ref{ss:S1} is, strictly speaking, unnecessary.   We first studied the KHK map $\Phi_{1,\epsilon}$ with the aim of providing a further example illustrating the methodology. Our initial intuition was that the KHK maps associated with $S_2$ and $S_3$ would also be integrable and preserving a rational fibration. However, after noting the apparent non-integrability of the KHK maps associated with $S_2$, $S_3$ (and $S_4$), we proceeded to construct pseudo-KHK maps.

Nonetheless, we would like to point out,  without going into details, that since the invariant fibrations associated with $S_2$ and $S_3$ have genus $0$ and that $\widetilde{\Phi}_{2,\epsilon}$ and $\widetilde{\Phi}_{3,\epsilon}$
are birational, they fall within the scope of the methodology presented in this paper. The following result summarizes the essential elements for studying the dynamics of $\widetilde{\Phi}_{2,\epsilon}$  using our approach:

\begin{propo}\label{p:widephi2}
The map $\widetilde{\Phi}_{2,\epsilon}$ preserves the fibration defined by the following pencil of conics $
C_{2,h}=\{x^2+y^2-h(1+y)^2=0\}.
$ A proper parametrization of each curve $C_h$ is $$
 P_{2,h}(t)=\left(-\frac{2 \, \left(h\, t -\sqrt{h}\right)}{-1+\left(h -1\right) t^{2}}+\sqrt{h},
-\frac{2 t \, \left(h\, t -\sqrt{h}\right)}{-1+\left(h -1\right) t^{2}}\right)$$
and $$P_{2,h}^{-1}(x,y)=\frac{\sqrt{h}\, x +\left(h -1\right) y +h}{\left(h -1\right) x +\sqrt{h}\, \left(h -1\right) y +\sqrt{h}\, \left(h +1\right)}
$$
On each curve $C_h$ the map $\widetilde{\Phi}_{2,\epsilon}|_{C_h}$ is conjugated with the M\"obius map
$$M_{2,h}(t)=\frac{\left(1-\epsilon  \sqrt{h}\right) t +\epsilon}{-\epsilon  \left(h +1\right) t +\epsilon  \sqrt{h}+1}.$$
For a fixed value of $\epsilon$, on each closed curve $C_h$, the map is conjugated to a rotation with constant rotation number
$$
\rho_\epsilon=\frac{1}{2\pi}\,\mathrm{arg}\left(
\frac{1-\epsilon^{2}+2 \,i\epsilon}{1+\epsilon^{2}}\right).
$$
\end{propo}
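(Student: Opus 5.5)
The plan is to follow the two steps of Section~\ref{s:method}, with most of the invariance handled by Proposition~\ref{p:lemaphil}.

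\emph{Invariance of the fibration.} The level sets of $H_2=(x^2+y^2)/(1+y)^2$ are exactly the conics $C_{2,h}$. By Proposition~\ref{p:lemaphil}(b), $H_2$ is a first integral of $\widetilde{\Phi}_{2,\epsilon}$, so the pencil is preserved. As an independent check, I would note that $L_2$ sends $u^2+v^2$ to $(x^2+y^2)/(1+y)^2=H_2$. Since $\Phi_{L,\epsilon}$ preserves $u^2+v^2$, invariance follows directly from \eqref{e:pKHK}.

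\emph{Step 1: the parametrization.} I would obtain $P_{2,h}$ by the line parametrization method through the base point $(\sqrt{h},0)\in C_{2,h}$, taking lines $y=t(x-\sqrt{h})$. Substituting into $x^2+y^2-h(1+y)^2=0$ gives a quadratic in $x$ with the known root $x=\sqrt{h}$. The second root yields the stated $P_{2,h}(t)$. Properness I would verify by computing the inverse via \eqref{E_inversa-general}, or simply by checking $P_{2,h}^{-1}(P_{2,h}(t))=t$ for the given rational $P_{2,h}^{-1}$. This verification holds modulo the curve equation, so the check must reduce along $C_{2,h}$.

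\emph{Step 2: the M\"obius map.} I would compute $M_{2,h}=P_{2,h}^{-1}\circ\widetilde{\Phi}_{2,\epsilon}\circ P_{2,h}$ symbolically. Here $\widetilde{\Phi}_{2,\epsilon}$ is a projective linear map in $(x,y)$, so composing with the degree-two parametrization gives rational functions of $t$ whose common quadratic factors must cancel. This cancellation is the main computational obstacle. I would handle it by factoring numerator and denominator, expecting a factor such as $-1+(h-1)t^2$ to drop out. The result should be the stated linear fractional $M_{2,h}$.

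With $a=1-\epsilon\sqrt h$, $b=\epsilon$, $c=-\epsilon(h+1)$, $d=1+\epsilon\sqrt h$, apply Proposition~\ref{p:dinamica-moeb}. We get
\[
\Delta=(d-a)^2+4bc=4\epsilon^2h-4\epsilon^2(h+1)=-4\epsilon^2<0
\]
for $\epsilon\neq0$, so $M_{2,h}$ is conjugate to a rotation. Its trace is $a+d=2$, hence
\[
\xi=\frac{2+2i\epsilon}{2-2i\epsilon}=\frac{(1+i\epsilon)^2}{1+\epsilon^2}=\frac{1-\epsilon^2+2i\epsilon}{1+\epsilon^2},
\]
which does not depend on $h$. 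This gives the constant rotation number $\rho_\epsilon$ on every closed curve $C_h$, with $0<h<1$ corresponding to the ellipses surrounding the center. The result is consistent with Corollary~\ref{c:pseudoKHKquadratic}(c).
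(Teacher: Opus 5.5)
Your proposal is correct and follows essentially the same route as the paper: parametrization by lines through the base point $(\sqrt{h},0)$, conjugation via \eqref{e:conjmoeb}, and Proposition~\ref{p:dinamica-moeb} with $\Delta=-4\epsilon^2$ and trace $a+d=2$, which makes $\xi$ independent of $h$. You also supply details the paper omits, and these check out: the slope family $y=t(x-\sqrt{h})$ does reproduce $P_{2,h}$; the given $P_{2,h}^{-1}$ agrees with $y/(x-\sqrt{h})$ modulo the curve equation; and invariance of the pencil follows from $H_L\circ L_2=H_2$.
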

\begin{proof}
The proof follows using the same  the parametrization by lines method already used in 
Sections \ref{ss:proof-Petrera-Suris-example} and \ref{ss:proofphi1}, taking the base point 
$x_0=\sqrt{h}$ and $y_0=0$, and we omit the details. The M\"obius map is computed using \eqref{e:conjmoeb}. For this map $M_{2,h}$, we have, $\Delta=-4 \epsilon^{2}$ and $
\xi=\frac{1-\epsilon^{2}+2 \,i\epsilon}{1+\epsilon^{2}}$. 
\end{proof}

The pseudo-KHK map $\widetilde{\Phi}_{3,\epsilon}$ preserves the genus $0$ fibration given by the curves
$$
C_{3,h}=\left\{9(x^2+y^2)-24x^2y+16x^4-h(16y-3)=0\right\}.
$$
However, these curves are not conics. To obtain a proper parametrization of this fibration we proceed as follows. First, we observe that we can rewrite
$
C_{3,h}=\left(4 x^{2}-3 y \right)^{2}+9 x^{2}+3 h -16 h y.
$
This allows us to consider the change $w=4x^2-3y$, which transforms the fibration $C_h$ into the fibration of conics
$$\widetilde{C}_{3,h}:=\left\{\left(9-\frac{64 h}{3}\right) x^{2}+w^{2}+3 h +\frac{16 h w}{3}\right\}.$$

Using the method of parametrization by lines and taking, for example, the base point
$x_0=({3 \sqrt{\left(64 h -27\right) h}})/({64 h -27})$ and $w_0=0$, we obtain a proper parametrization of $\widetilde{C}_{3,h}$. Undoing the change, we obtain the proper parametrization of $C_{3,h}$ given by: $
P_{3,h}(t)=\left(p_{3,1}(t),p_{3,2}(t)\right),
$
where
$$
p_{3,1}(t)=\frac{2 \left(8 h t -3 \,A\right)}{-3 t^{2}+B}+\frac{3 A}{B}
$$ and
$$
p_{3,2}(t)=\frac{44 h \left(256 h -81\right) t^{4}-6 A \left(128 h -27\right) t^{3}+216 B h \,t^{2}-54 A B t +12 h \,B^{2}}{A\, \left(-3 t^{2}+B\right)^{2}}
$$
and
$$
P_{3,h}^{-1}(x,y)=\frac{12 B C \,x^{3}+180 A B \,x^{2}+\left(-9 B C y +1024 h^{2} B \right) x -27 A B y +48 A B h }{1728 x^{3} A +3 B E  \,x^{2}+\left(18 A D -1296 A y \right) x -144 B h y +h \left(256 h -27\right) B}
$$
with $A=\sqrt{\left(64 h -27\right) h}$, $B=64 h -27$,
$C=64 h +27$, $D=128h-27$ and $E=128h+27$.

Unfortunately, when we compute the M\"obius maps associated via \eqref{e:conjmoeb} using a computer algebra software, we are unable to simplify the expressions sufficiently. Nevertheless, by fixing a value of the energy $h$, we can obtain them explicitly and recover the rotation number, already predicted by Proposition \ref{p:lemaphil} and Corollary \ref{c:pseudoKHKquadratic}.

\subsection{Pseudo-KHK maps for non quadratic isochronous vector fields}\label{sspKHKnonquadratic}

The Kahan-Hirota-Kimura discretization method is a specific integration scheme for quadratic vector fields, defined in the plane by
$
\dot{\mathbf{x}} =Q(\mathbf{x}) + B\mathbf{x} + C,$ with $\mathbf{x} \in \mathbb{R}^2$,
where $Q$ is a quadratic form, $B$ is a matrix in $\mathbb{R}^{2 \times 2}$, and $C \in \mathbb{R}^2$. The method defines a map $\mathbf{x} \mapsto \mathbf{x}'$ with step size $2\epsilon$ given by
$
(\mathbf{x}' - \mathbf{x})/({2\epsilon})
=Q(\mathbf{x},\mathbf{x}') + B(\mathbf{x} + \mathbf{x}')/2 + C,
$
where
$
Q(\mathbf{x},\mathbf{x}')=\left( Q(\mathbf{x} + \mathbf{x}') - Q(\mathbf{x}) - Q(\mathbf{x}') \right)/2.
$ 
From this expression, one obtains the formula for the KHK map $\Phi_\epsilon$ defined in Equation \eqref{e:phi}.

Of course, by abuse of notation, we can associate a map $\Phi_\epsilon$, defined as in Equation \eqref{e:phi}, to any polynomial vector field $X$ of arbitrary degree. Unfortunately, in general this map will not be birational. This is the case of the \emph{cubic} isochronous vector field $S_{2}^*$ that, using the terminology in \cite{ChaSab},  is 
$$
S_{2}^*=\left(-y+x^3-xy^2\right)\dfrac{\partial }{\partial x}+
\left(x+x^2y-y^3  \right) \dfrac{\partial }{\partial y}.
$$
This vector field has an isochronous center at the origin and it has a first integral
$$
H_{2*}=\frac{x^2+y^2}{1+2xy}.
$$
The energy level curves of this integral form a genus-$0$ invariant fibration.
 
For this vector field, by using Equation \eqref{e:phi}, we obtain: 
\begin{multline*}
\Phi_{2*,\epsilon}(x,y)=\left(\frac{\epsilon^{2} x^{5}-2 \epsilon^{2} x^{3} y^{2}+\epsilon^{2} x \,y^{4}-2 \epsilon  \,x^{3}+2 \epsilon  x \,y^{2}-4 \epsilon^{2} y^{3}-\epsilon^{2} x +x -2 \epsilon  y}{D(x,y)}\right.\\
\left.\
\frac{\epsilon^{2} x^{4} y -2 \epsilon^{2} x^{2} y^{3}+\epsilon^{2} y^{5}-4 \epsilon^{2} x^{3}-2 \epsilon  \,x^{2} y +2 \epsilon  \,y^{3}+2 \epsilon  x -\epsilon^{2} y +y}{D(x,y)}
\right).
\end{multline*}
with $D(x,y)=3 \epsilon^{2} x^{4}-6 \epsilon^{2} x^{2} y^{2}+3 \epsilon^{2} y^{4}-4 \epsilon  \,x^{2}+4 \epsilon^{2} x y +4 \epsilon  \,y^{2}+\epsilon^{2}+1$.

It is not difficult to check that $H_{2*}(\Phi_{2*,\epsilon})=H_{2*}$, so it preserves the first integral of the original vector field $S_{2}^*$.  However $S_2^*(\Phi_{2*,\epsilon}(\mathbf{x}))\neq (D\Phi_{2*,\epsilon}\cdot S_2^*)|_\mathbf{x}$, hence $S_{2}^*$ is not a Lie symmetry of $\Phi_{2*,\epsilon}$. This can be addressed by using the pseudo-KHK map. Indeed, using the linearization $(u,v)=L_{2*}(x,y)$ given by
$$
u=\frac{x}{\sqrt{1+2xy}},\,
v=\frac{y}{\sqrt{1+2xy}}
$$
we can compute its associated pseudo-KHK map
$$
\widetilde{\Phi}_{2*,\epsilon}=\left(
-\frac{\left((\epsilon^{2} -1)\,x +2 y \epsilon \right)\, G_*(x,y;\epsilon)}{\left(\epsilon^{2}+1\right)\, \sqrt{2 y x +1}},
\frac{\left( 2 \epsilon  x +(1-\epsilon^{2}) y \right) G_*(x,y;\epsilon)}{\left(\epsilon^{2}+1\right)\, \sqrt{2 y x +1}}
\right),
$$
with
$$G_*(x,y;\epsilon)=
\sqrt{\frac{\left(2 y x +1\right) \left(\epsilon^{2}+1\right)^{2}}{4 \epsilon  \left(\epsilon^2 -1\right)  x^{2}+16 \epsilon^{2} x y -4 \epsilon  \left(\epsilon^2 -1\right)  y^{2}+\left(\epsilon^{2}+1\right)^{2}}}.
$$
By Proposition \ref{p:lemaphil} we have that $S_{2}^*$ is a Lie symmetry of $\widetilde{\Phi}_{2*,\epsilon}$. In summary:

\begin{propo}\label{p:KHKiPKHKcubic}
The following statements hold:
\begin{enumerate}[(a)]
\item $H_{2*}$ is a first integral of the KHK map $\Phi_{2*,\epsilon}$, but $S_2^*$ is not a Lie symmetry of it.
\item $H_{2*}$ is a first integral of the pseudo-KHK map $\widetilde{\Phi}_{2*,\epsilon}$, and  $S_2^*$ is a Lie symmetry of it.
\end{enumerate}
\end{propo}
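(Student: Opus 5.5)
Statement (b) follows from Proposition \ref{p:lemaphil}; statement (a) needs a direct verification.

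For (b), I will check the hypotheses of Proposition \ref{p:lemaphil} for $L_{2*}$. First, verify that $L_{2*}$ conjugates $S_2^*$ with $X_L$ for $\omega=1$, i.e. $DL_{2*}\cdot S_2^* = X_L\circ L_{2*}$. Writing $s=1+2xy$, we have $\dot s = 2(\dot x y + x\dot y)$. Along $S_2^*$ one gets $\dot x y + x\dot y = x^2-y^2$, since the cubic terms cancel: $x^3y - xy^3 + x^3y - xy^3$ minus the same gives $0$. Hence $\dot s = 2(x^2-y^2)$. Then
\[
\dot u = \frac{\dot x}{\sqrt s}-\frac{x\dot s}{2s^{3/2}} = \frac{(-y+x^3-xy^2)(1+2xy)-x(x^2-y^2)}{s^{3/2}},
\]
and expanding the numerator gives $-y - 2xy^2\cdot y\cdots$. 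I expect it to simplify to $-y(1+2xy)$, so that $\dot u=-v$, and similarly $\dot v=u$. This is a routine polynomial identity. It holds on the region $1+2xy>0$, which contains a neighbourhood of the origin. Since $L_{2*}$ is analytic and locally invertible there (its Jacobian at the origin is the identity), Proposition \ref{p:lemaphil}(a) gives that $S_2^*$ is a Lie symmetry, and (b) gives that every first integral of $S_2^*$ is preserved. In particular $H_{2*}$ is preserved, and a direct check confirms this: $H_{2*}=u^2+v^2$ because $(x^2+y^2)/s = u^2+v^2$, and $u^2+v^2$ is invariant under the rotation $\Phi_{L,\epsilon}$.

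For (a), I will show $H_{2*}(\Phi_{2*,\epsilon})=H_{2*}$ by direct substitution. I will write $\Phi_{2*,\epsilon}=(N_1/D,N_2/D)$ and verify the polynomial identity
\[
(N_1^2+N_2^2)(1+2xy)=(x^2+y^2)\,(D^2+2N_1N_2)
\]
with computer algebra. A conceptual alternative is to check that both sides factor through the invariant $u^2+v^2$. To show that $S_2^*$ is not a Lie symmetry, it suffices to exhibit one point where the compatibility equation \eqref{e:Lie-Symm} fails. I would choose a simple point such as $(x,y)=(1,0)$ and take $\epsilon=1$ (or keep $\epsilon$ symbolic and expand to low order in $\epsilon$). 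Then compute $\Phi_{2*,\epsilon}(1,0)$, the vector $S_2^*$ at the image, and $D\Phi_{2*,\epsilon}(1,0)\cdot S_2^*(1,0)$, and check that they differ.

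The expected main obstacle is this last non-identity, together with the algebraic identity in (a): both are heavy rational computations. A perturbative approach would avoid that heaviness. One would expand $\Phi_{2*,\epsilon}$ in $\epsilon$ up to order $\epsilon^3$ and show that the commutator condition fails at third order. I would try that if a concrete-point evaluation were unwieldy, but the concrete point should suffice.
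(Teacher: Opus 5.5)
Your overall plan matches the paper's: part (a) is a direct computation, and part (b) follows from Proposition~\ref{p:lemaphil}. Your observation that $H_{2*}=H_L\circ L_{2*}$ gives the invariance in (b) immediately. However, two steps fail as written.

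First, the cubic terms in $\dot x\,y+x\,\dot y$ do not cancel. In fact $\dot x\,y+x\,\dot y=(x^2-y^2)(1+2xy)$, so $\dot s=2(x^2-y^2)s$, not $2(x^2-y^2)$. With your $\dot s$, the numerator of $\dot u$ does not reduce to $-y(1+2xy)$. With the correct one, $s^{3/2}\dot u=s\bigl(\dot x-x(x^2-y^2)\bigr)=-ys$, so $\dot u=-v$, and likewise $\dot v=u$.

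Second, and more seriously, $(1,0)$ cannot witness the failure of the Lie symmetry condition. It lies on the level $H_{2*}=1$, which is the pair of invariant lines $(x-y)^2=1$. On $x-y=1$, set $s=2y+1$. Then $S_2^*$ restricts to $\dot s=1+s^2$, and $\Phi_{2*,\epsilon}$ restricts to the M\"obius map $s\mapsto (s+2\epsilon)/(1-2\epsilon s)$, i.e. $\arctan s\mapsto \arctan s+\arctan(2\epsilon)$. This is exactly the time-$\arctan(2\epsilon)$ map of the flow. Hence the compatibility equation \eqref{e:Lie-Symm} holds at $(1,0)$ for every $\epsilon$: for $\epsilon=1$, $\Phi_{2*,1}(1,0)=(-1,-2)$ and both sides equal $(5,5)$. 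Your symbolic-$\epsilon$ fallback at that point would also detect nothing.

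You need a point off these lines. At $(1/2,0)$ with $\epsilon=1$ one gets $\Phi_{2*,1}(1/2,0)=(-7/38,8/19)$ and $(D\Phi_{2*,1}\cdot S_2^*)|_{(1/2,0)}=\tfrac{133}{61}\,S_2^*|_{(-7/38,8/19)}$, so the two sides differ. Equivalently, expanding in $\epsilon$, $\Phi_{2*,\epsilon}(\mathbf{x})$ agrees up to $O(\epsilon^4)$ with the flow of $S_2^*$ at $\mathbf{x}$ for time $2\epsilon+\epsilon^3\lambda(\mathbf{x})$, where $\lambda=-\tfrac23+8xy-2(x^2-y^2)^2$. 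Then $S_2^*(\lambda)=8(x^2-y^2)\bigl(1-(x-y)^2\bigr)\bigl(1+(x+y)^2\bigr)\not\equiv 0$. Its zero set, $\{x=\pm y\}\cup\{x-y=\pm1\}$, is exactly where a third-order pointwise test is blind. This explains why $(1,0)$ fails as a witness while generic points work.
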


\section{Conclusions}\label{s:conclusions}

We present a simple methodology for studying the global dynamics of planar KHK maps preserving rational (genus 0) fibrations. This approach complements the more geometric methods developed in \cite{Celledoni2019b,Kamp19,Petrera19-1,Petrera19-2}, where KHK maps associated with vector fields with quadratic Hamiltonians are studied.

Following the previous work \cite{LlorMan}, we show that, on each invariant curve, these maps are conjugate to a M\"obius transformation. Our methodology relies on the use of proper (birational) parameterizations inherent to the invariant fibration curves. To illustrate the technique, we revisit a particular example from those considered in the references, showing how KHK maps exhibit more complex dynamical behaviors than the continuous flows they are intended to approximate.

To broaden the range of examples, we consider the KHK maps associated with quadratic vector fields having isochronous centers,  particularly for those cases where the first integral of the field induces a genus $0$ fibration. Isochronous centers are well known for their properties in the context of integrability theory: existence of commuting vector fields, linearizations, first integrals, and inverse integrating factors, among others.  Our analysis, thus, places special emphasis on the four cases of quadratic isochronous vector fields, denoted by $S_i$ with $i=1,\ldots,4$, using the Chavarriga-Sabatini classification \cite{ChaSab}. In particular, we focus on the $S_1$ system. We demonstrate that the resulting one-parameter family of KHK maps, which varies with the integration step $\epsilon$, possesses remarkable geometric properties: it  preserves the original first integral  and  admits the continuous vector field $S_1$ as a Lie symmetry. A key finding is that the map inherits a discrete version of isochronicity: The dynamics on each energy level is conjugate to a rotation with an explicit rotation number function. This function depends only on the step size and not on the energy level which characterizes the invariant curve in the rational fibration induced by the first integral. Consequently, we prove that for a  dense set of  values of $\epsilon$, these maps are globally periodic, covering all possible periods with the exception of period 2.

In contrast, for the isochronous vector fields  $S_2$, $S_3$ and $S_4$, the numerical investigations suggest that the standard KHK discretizations are non-integrable, exhibiting behaviors typical of perturbed twist maps. To address this, we introduce the concept of  pseudo-KHK maps. These alternative discretizations are constructed to ensure the preservation of first integrals and the fact that the original system is a Lie symmetry of the pseudo-KHK map. We also remark that the notion of pseudo-KHK maps can be generalized to isochronous vector fields of degree greater than 2, giving rise to a new source of integrable maps associated with integrable planar vector fields. 

\bigskip

 \centerline{\textbf{Acknowledgments}}

\medskip

The authors are supported by the Ministry of Science and Innovation--State Research Agency of the
Spanish Government through grant PID2022-136613NB-I00.
 They also acknowledge the 2021 SGR 01039 consolidated research groups recognition from Ag\`{e}ncia de Gesti\'{o} d'Ajuts Universitaris i de Recerca, Generalitat de Catalunya.

\end{document}